\newcommand{\G}{\Gamma}
\newcommand{\de}{\delta}
\newcommand{\vt}{\vartheta}
\newcommand{\ka}{\kappa}
\newcommand{\m}{\mu}
\newcommand{\n}{\nu}
\newcommand{\ro}{\rho}
\newcommand{\s}{\sigma}
\newcommand{\Si}{\Sigma}
\newcommand{\vs}{\varsigma}
\newcommand{\ta}{\tau}
\newcommand{\f}{\phi}
\newcommand{\vp}{\varpi}
\newcommand{\p}{\psi}
\newcommand{\om}{\omega}
\newcommand{\Om}{\Omega}
\newcommand{\Omb}{\pmb{\Omega}}
\newcommand{\uu}{\upsilon}
\newcommand{\C}{{\mathbb C}}
\newcommand{\R}{{\mathbb R}}
\newcommand{\ab}{{\mathbf a}}
\newcommand{\cb}{\pmb{\mathbf c}}
\newcommand{\hb}{{\mathbf h}}
\newcommand{\mb}{{\mathbf m}}
\newcommand{\rb}{{\mathbf r}}
\newcommand{\Bb}{{\mathbf B}}
\newcommand{\Hb}{{\mathbf H}}
\newcommand{\Ib}{{\mathbf I}}
\newcommand{\Jb}{{\mathbf J}}
\newcommand{\Kb}{{\mathbf K}}
\newcommand{\Xb}{{\mathbf X}}
\newcommand{\aF}{\mathfrak a}
\newcommand{\AF}{\mathfrak A}
\newcommand{\Bc}{{\mathcal B}}
\newcommand{\Cc}{{\mathcal C}}
\newcommand{\Dc}{{\mathcal D}}
\newcommand{\Ec}{{\mathcal E}}
\newcommand{\Gc}{{\mathcal G}}
\newcommand{\Hc}{{\mathcal H}}
\newcommand{\Kc}{{\mathcal K}}
\newcommand{\Lc}{{\mathcal L}}
\newcommand{\Sc}{{\mathcal S}}
\newcommand{\Vc}{{\mathcal V}} 
\newcommand{\Zc}{{\mathcal Z}}
\newcommand{\dist}{{\rm dist}\,}
\newcommand{\grad}{{\rm grad}\,}
\newcommand{\meas}{\operatorname{meas\,}}
\newtheorem{thm}{Theorem}[section]
\newtheorem{cor}[thm]{Corollary}
\newtheorem{lem}[thm]{Lemma}
\theoremstyle{definition}
\newtheorem{defin}[thm]{Definition}
\newtheorem*{rem}{Remark}
\numberwithin{equation}{section}
\begin{document}


\title[Approximation by solutions of elliptic equations]{Approximation of functions on a compact set  by solutions of elliptic equations. Quantitative results}

\author{Grigori Rozenblum}
\address{Chalmers Univ. of Technology; The Euler International Mathematical Institute and St.Petersburg State Univ.}
\email{$\mathrm{grigori@chalmers.se}$}
\author{Nikolai Shirokov}
\address{St.Petersburg State Univ.;  National Research Univ. Higher School of Economics, St.Petersburg, Russia }
\email{$\mathrm{nikolai.shirokov@gmail.com}$}
\begin{abstract}We establish that a generalized H\"{o}lder continuous  function on an $(m-2)$-Ahlfors regular compact set  in $\R^m$ can be approximated by solutions of an elliptic equation, with the rate of approximation determined by the continuity modulus of the function.
\end{abstract}
\thanks{The work of G.R. was performed at the St. Petersburg Leonhard Euler International Mathematical Institute and supported by
the Ministry of Science and Higher Education (Agreement No. 075–15–2022–287).\\
The work of N.Sh. was supported by  RScF Grant 23-11-00171}
\maketitle
\section{Introduction}\label{sect.intro} Approximation of functions by some better ones is a classical topic in Analysis. We just mention the Weierstrass theorem on real polynomial approximations,  the Lavrentiev-Keldysh-Mergelyan theorem on approximation of continuous  function of a complex variable by polynomials, and Walsh's results on the rational approximation, to name just a few classical results. Up to now, there are numerous studies  of approximation by polynomials, analytic, and harmonic functions, with common feature of these functions being  that they possess a lot of algebraic and analytic structures, which  enrich possible methods of analysis (Math.Sci.Net contains several thousand references on this topic.) A detailed discussion, with a number of references, can be found in the books \cite{Andri}, \cite{Gardiner}, \cite{Dzyadyk}, \cite{Tamrazov}; some most recent developments are presented, in particular,  in   papers  by P.~Paramonov, P.~Gauthier and their co-operators (see, e.g., their latest publications \cite{Paramonov}, \cite{Gauthier} and references therein).

Along with this direction, there is an interest in approximations by means of more special objects,  in particular, by  solutions of rather general differential  equations, which possess considerably fewer structures, therefore the approaches developed for the former type of problems fail. A fundamental result was obtained here by F.Browder  in \cite{Browder}, \cite{Browder1}, where  for continuous functions on a compact set of zero Lebesgue measure, there was established the   property of  approximation   by solutions of a very general class of differential equations, including the elliptic ones or order $\rb$, having  the form $\Lc u\equiv \Lc (X,D_X)u=0.$ The conditions imposed on the operator $\Lc$ were rather permissive, namely, the coefficients of the operator $\Lc$ as well the ones of its formal dual $\Lc',$ should belong to $C^1$ in some domain $\Omb$, with $\Lc'$ possessing the property of the local uniqueness in $\Omb$ for  the Cauchy problem. In this setting, using advanced machinery of Functional Analysis, F.Browder proved that for any compact set $\Kb\subset\Omb,$ such that $\Omb\setminus \Kb$ is connected, any continuous function on $\Kb$ can be approximated in $C(\Kb)$ by $C^{\rb}$ solutions of the equation $\Lc u =0.$
This extraordinary  result was later extended in the direction of further relaxing the conditions imposed on the structure of the operator and its coefficients. However, certain questions, considered in the study of polynomial and analytic approximations, remained untouched upon in \cite{Browder}, \cite{Browder1} and further studies. Namely, these questions concern finding a relation between the quality of the approximated function and the quality  (say, the convergence rate) of the approximation.  For the traditional approximation methods, there are classical results of this kind. One should mention here, of course, the results by   D.~Jackson and S.~Bernstein   connecting the smoothness class  of a function on an interval and the rate of its polynomial approximation. Such results have been later called 'the constructive description of functional classes.' Further developments by S.~Nikolski, V.~Dzyadyk, I.~Shevchuk, V.~Belyi, V.~Andrievski, N.~Lebedev,   and  many others dealt with such constructive description of various classes of functions in terms of approximation by algebraic and trigonometrical polynomials, analytical and entire functions, but not by more general ones.

Relatively recently, a progress was made in finding such constructive description of H\"older classes in terms of approximation by harmonic functions, which required, as it was explained above for the qualitative approximation problem, a number of new ideas.

In the present paper, we combine two problems described above. For  a compact set $\Kb$ of zero Lebesgue measure (more exactly, an Ahlfors-David $(m-2)$-regular set), we find a necessary and sufficient condition for a function to  belong to a prescribed generalized H\"older class in terms of uniform approximation by solutions of a given elliptic second order divergence form equation   with Dini continuous coefficients. We use some ideas developed recently in \cite{AlSh} and \cite{Pavlov} where the approximation by harmonic functions was considered. Here, it turns out that the sufficient  condition in constructive terms for a function to belong to a H\"older  class is proved rather elementarily. On the opposite, the proof of the necessary condition, namely that a function in the class under consideration can, in fact, be approximated by  solutions of the equation, requires some tricky calculations. The advantage of our results here, compared, say, with the ones in \cite{Browder}, is that we not only prove the existence of approximating solutions but  present these approximations explicitly  and give  sharp error estimates.

To explain, in the introductory manner, our results, we recall the general philosophy of approximation of functions. Having a 'bad' function $f$, one approximates it by 'nice' functions $v_\de$ in some class. The closer $v_\de$ is to $f$ in some metric, the worse is the approximating function $v_\de,$ in particular, its norm in a proper  space may grow while $\de\to 0.$ Quantitative approximation results specify  the correspondence  between $\de$ and this worsening of the approximating function. In our case, the function $f$ is continuous on a compact set $\Kb\subset\R^m,$ $m\ge3,$ moreover, it belongs to some generalized H{\"o}lder class $C^\om(\Kb)$ corresponding to a continuity modulus $\om(t).$ The worsening of the approximating function  $v_\de$ is measured by the smallness of the $\de$-neighborhood $\Kb_\de$ of $\Kb,$ where $v_\de$ is 'nice' in the proper sense, and by the size of the gradient of $v_\de$ on $\Kb_\de.$ In this setting, the main results of the paper are as follows (the detailed formulations can be found  further on in the text.)

\textbf{The sufficient condition:} If for a function $f\in C(\Kb)$ for any sufficiently small $\de>0,$ there exists a function $v_\de$ on $\Kb_\de$ such that
 \begin{equation}\label{11}
 |f(X)-v_\de(X)|\le C\om(\de) \, \mbox{on}\, \Kb\,  \mbox{and}\, |\nabla(v_\de)(X)|\le C'\frac{\om(\de)}{\de}\, \mbox{on}\, \Kb_\de.
 \end{equation}
  then $f\in C^{\om}(\Kb).$

\textbf{The necessary condition}. If $\Kb$ is sufficiently regular, for example, is an  a Lipschitz surface of codimension $2$ and  $f\in C^{\om}(\Kb)$ (or, more generally, is Aflfors $(m-2)$-regular), then for any small $\de,$  there exists a function $v_\de$ on $\Kb_\de$ satisfying \eqref{11} and being, additionally, the weak solution in $\Kb_\de$ of a fixed second order elliptic equation $\Lc u=0$ in divergence form, with Dini continuous coefficients.

Being put together, these two results can be understood as an example of a 'self-improving' property. Namely, if a function $f$ can be approximated in the sense of  \eqref{11} by \emph{some } functions $v_\de,$ it can be, in fact, approximated, in the same sense and with the same quality, by  essentially more special functions,  solutions of the prescribed  elliptic equation.

Shortly on the structure of the paper. In the second section we present some known  basic facts we need, concerning the Hau{\ss}dorff measure and Ahlfors-David regular sets. Although the results of the paper are valid for a more general class of compact sets, the Ahlfors-David regular ones admit a more explicit and visual description.  We describe  here some key facts about the Green function for elliptic equations with moderately regular coefficients as well.  Then, in Sect.3,  we establish the sufficient approximation  condition.  The essential part of our further considerations is based on the construction of an extension of a continuous function from a compact set $\Kb$ to its neighborhood, with control over the derivatives of the extension; this construction is described in Sect.4. After deriving in Sect.5 some important estimates   of integrals involving the continuity modulus, in the next sections we justify the integral representation for the extended function using the Green function, and finally present  the construction of the approximating solutions of the elliptic equation and  establish the estimates for the solution, which justifies the necessity part of the approximation theorem.

Our reasoning concerns approximation of an individual H{\"o}lder continuous function. It is interesting to extend our considerations in order to study the metric properties of the approximating operator. While we consider the functions on a compact set which is Ahlfors $(m-2)$-regular (since it directly extends the previously studied case of a curve in $\R^3$), certain modifications of our approach enable one to handle $\theta$-regular sets of any order $\theta<{m-1}.$ We plan to treat these  and other related topics in further publications.

\section{Preliminaries}\label{sect.prel}
\subsection{Hau{\ss}dorf measure} We recall the definition of the Hau{\ss}sdorff measure (see, e.g. \cite{Falconer}). We denote by $\wr U\wr$ the diameter of the set $U\in \R^m.$  Let $\Ec\subset \R^m$ be a bounded set. A finite or countable collection of subsets  $U_k\subset \R^m$ is called $\de$-cover of $\Ec,$ $\de>0,$ if $\wr U_k\wr\le \de$ and $\Ec\subset\cup U_k.$  For a fixed $\vt\ge 0$ and $\de>0$, we denote by $\Hc_\de^\vt(\Ec)$ the quantity
\begin{equation*}
  \Hc^\vt_\de(\Ec)=\inf\left\{\sum_{k}\wr U_k\wr^\vt: \{U_k\} \, \mbox{is}\,\mbox{a}\,\de\mbox{-cover}\,\mbox{of}\, \Ec\right\}.
\end{equation*}
It is not excluded that $\Hc_\de^\vt(\Ec)$ equals zero. As $\de\searrow 0,$ this infimum increases and tends to a limit, which is allowed to be zero or infinity. This limit,
\begin{equation*}
  \Hc^\vt(\Ec)=\lim_{\de\to 0} \Hc^\vt_\de(\Ec),
\end{equation*}
is called  the $\vt$-dimensional Hau{\ss}dorff measure of the set $\Ec,$
It is known, see, e.g., \cite{Federer}, that $\Hc^\vt$ is a regular Borel  measure in $\R^m$.

It follows from the definition that for any set $\Ec,$ there exists a unique $\theta\ge0$ such that  $\Hc^\vt(\Ec)=\infty$ for $\vt<\theta$ and $\Hc^\vt(\Ec)=0$ for $\vt>\theta$. This number $\theta$ is called the Hau{\ss}dorff dimension of $\Ec,$ $\theta:=\dim_{\Hc}(\Ec)$. Only sets of Hau{\ss}dorff dimension $\theta$ (but not all of them by far) may have finite positive Hau{\ss}dorff measure $\Hc^\theta$. Discrete sets (but not only they) have zero Hau{\ss}dorff dimension.

The Hau{\ss}dorff measure is a convenient generalization of the Lebesgue measure. For $\theta=m$, it coincides with the $m$-dimensional Lebesgue measure, up to a constant factor, determined by the dimension. For subsets in a compact Lipschitz surface $\Si$ of dimension $\theta,$ $1\le \theta< m,$ the Hau{\ss}dorff measure, again, up to a constant factor, coincides with the surface measure on $\Si$, generated in the standard way by the Lebesgue measure in $\R^{m}.$  The Hau{\ss}dorff measure is invariant with respect to shifts and rotations; it is also homogeneous under a dilation $\Dc_r$ with ratio $r$:
 \begin{equation*}
   \Dc_r: x\mapsto r x, r>0:
 \end{equation*}
$\Hc^\theta(\Dc_r(\Ec))=r^{\theta}\Hc^\theta(\Ec).$  More generally, under a bi-Lipschitz mapping the Hau{\ss}dorff measure changes in a controllable way: if $ c|x-y|\le|F(x)-F(y)|\le C|x-y|$ then
\begin{equation*}
  c^\theta \Hc^\theta(\Ec)\le \Hc^\theta(F(\Ec))\le C^\theta\Hc^\theta(\Ec).
\end{equation*}
\subsection{Ahlfors regular sets}
The union of a finite collection of sets having  Hau{\ss}dorff dimension $\theta$ is, again, a set of this  Hau{\ss}dorff dimension. On the other hand, the union of finitely many  sets of different Hau{\ss}dorff dimension has Hau{\ss}dorff dimension coinciding with the largest one of these sets. So, generally, a set may have pieces of different Hau{\ss}dorff dimension. The following definition describes
sets which have one and the same Hau{\ss}dorff dimension everywhere.

\begin{defin}\label{DefAlfors}
 Let, for a point $X\in\R^{m},$  $B_r(X)$ denote the open  ball with center $X$  and radius $r.$ The set $\Ec\subset\R^m$ is called 'Ahlfors regular of dimension $\theta$' (called sometimes 'Ahlfors-David regular', or, shorter, '$\theta$-regular') if there exist positive  constants $C_{\pm}=C_{\pm}(\Ec,\theta)$ such that for any $X\in \Ec,$
 \begin{equation*}
   C_-r^{\theta}\le \Hc^{\theta}(\Ec\cap B_r(X) )\le C_+ r^{\theta}, 0<r\le\wr\Ec\wr.
 \end{equation*}
\end{defin}
In particular, this property  is preserved under bi-Lipshitz mappings.
It can be considered as a possible convenient generalization of the notion, for $\theta=1$, of chord-ark curves, see, e.g., \cite{AlSh}. Compact Lipschitz surfaces, as well as relatively open sets on them, are,  obviously, Ahlfors regular. Ahlfors regular sets arise in many topics of Analysis. One can find a discussion of application of this notion, e.g., in \cite{DS1}., \cite{JW}, \cite{MatBook}, etc.

\subsection{A cover lemma} Cover lemmas play traditionally an important role in Analysis.  We will use the  following cover lemma for Ahlfors regular sets, which  can be found  in \cite{Mattila}, Lemma 2.1.

\begin{lem}\label{Lem.cover} Let $\Ec\subset\R^m$ be a compact $\theta$-regular set. Then for some positive constant  $C(\Ec)$,  any $r<R$, $r,R\in (0,\wr\Ec\wr),$ and any point $X_0\in \Ec,$ there exists a finite collection of points $\Zc=\{Z_l, \, l=1,\dots, \mb\},$ such that
\begin{equation}\label{mcontrol}
  (5^\theta C(\Ec))^{-1}(Rr^{-1})^\theta\le \mb\le 2^\theta C(\Ec)(Rr^{-1})^\theta,
\end{equation}
all points $Z_l$ are contained in $\Ec\cap B_R(X_0)$, the balls $B_r(Z_l)$ are disjoint and the balls $B_{5r}(Z_l)$ cover $\Ec\cap B_{R}(X_0).$
\end{lem}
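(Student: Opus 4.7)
My plan is to follow a standard Vitali-type $5r$-covering argument: pick a maximal packing of $r$-disjoint balls in $\Ec\cap B_R(X_0)$, deduce the $5r$-covering by maximality, and extract the two cardinality bounds from Ahlfors regularity applied at the two scales $r$ and $R$.

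First, I would select $\Zc=\{Z_1,\dots,Z_\mb\}$ to be a maximal collection of points in $\Ec\cap B_R(X_0)$ with the property that the balls $B_r(Z_l)$ are pairwise disjoint, i.e.\ $|Z_i-Z_j|\ge 2r$ for $i\ne j$. The existence and finiteness of a maximum-size such collection follow either from the forthcoming upper estimate on $\mb$, or directly from the fact that the disjoint sets $B_{r/2}(Z_l)\cap\Ec$ each carry $\Hc^\theta$-mass at least $C_-(r/2)^\theta$ while sitting inside the bounded set $\Ec\cap B_{2R}(X_0)$ of finite $\Hc^\theta$-measure. The covering property is then immediate from maximality: if some $Y\in\Ec\cap B_R(X_0)$ lay outside every $B_{5r}(Z_l)$, then a fortiori $|Y-Z_l|>2r$ for all $l$, so $B_r(Y)$ would be disjoint from every $B_r(Z_l)$, and appending $Y$ to $\Zc$ would enlarge the packing, contradicting maximality; hence $\{B_{5r}(Z_l)\}_{l=1}^{\mb}$ covers $\Ec\cap B_R(X_0)$.

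For the cardinality bounds, I invoke Definition~\ref{DefAlfors} at the two scales. The disjoint balls $B_r(Z_l)$ are contained in $B_{R+r}(X_0)\subset B_{2R}(X_0)$ since $r<R$, so additivity together with the two Ahlfors bounds yields
\begin{equation*}
  \mb\,C_- r^{\theta}\le\sum_{l} \Hc^{\theta}(\Ec\cap B_r(Z_l))\le \Hc^{\theta}(\Ec\cap B_{2R}(X_0))\le C_+(2R)^{\theta},
\end{equation*}
hence $\mb\le 2^{\theta}(C_+/C_-)(R/r)^{\theta}$. Dually, the covering just established, combined with the Ahlfors lower bound on $\Ec\cap B_R(X_0)$ and the Ahlfors upper bound on each $\Ec\cap B_{5r}(Z_l)$, gives
\begin{equation*}
  C_- R^{\theta}\le \Hc^{\theta}(\Ec\cap B_R(X_0))\le\sum_{l} \Hc^{\theta}(\Ec\cap B_{5r}(Z_l))\le \mb\, C_+(5r)^{\theta},
\end{equation*}
hence $\mb\ge 5^{-\theta}(C_-/C_+)(R/r)^{\theta}$. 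Choosing $C(\Ec):=C_+/C_-$ recovers \eqref{mcontrol} exactly.

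I do not foresee any substantive obstacle: the geometric $5r$-step is elementary (in fact the proof delivers the stronger $2r$-covering, but the $5r$-form stated in the lemma is what we need), and Ahlfors regularity precisely converts the volume inequalities at the two scales into matching two-sided control on $\mb$. The only minor points needing attention are (i) the edge case $2R>\wr\Ec\wr$ in the upper Ahlfors estimate, which is harmless because then $\Hc^{\theta}(\Ec\cap B_{2R}(X_0))\le \Hc^{\theta}(\Ec)\le C_+\wr\Ec\wr^{\theta}\le C_+(2R)^{\theta}$, and (ii) the fact that the lower Ahlfors bound $C_- r^{\theta}\le \Hc^{\theta}(\Ec\cap B_r(Z_l))$ requires $r\le\wr\Ec\wr$, which holds because $r<R\le\wr\Ec\wr$.
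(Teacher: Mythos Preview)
Your argument is correct and is precisely the standard Vitali-type proof the paper has in mind: the paper does not supply its own proof of this lemma but cites it from \cite{Mattila}, calling the argument ``quite elementary,'' and your maximal-packing plus two-sided Ahlfors comparison is exactly that argument. One tiny omission: you handled the edge case $2R>\wr\Ec\wr$ but the symmetric issue $5r>\wr\Ec\wr$ in the upper Ahlfors bound for $\Hc^{\theta}(\Ec\cap B_{5r}(Z_l))$ is dispatched identically, by the same trivial majorization $\Hc^{\theta}(\Ec)\le C_+\wr\Ec\wr^{\theta}\le C_+(5r)^{\theta}$.
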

The, quite elementary, proof of this statement in \cite{Mattila} shows also  that if the set $\Ec$ is compact, the system $\Zc$ can be constructed in such way that \emph{its upper estimate in \eqref{mcontrol}} services all points $X_0$ of the set $\Ec$ simultaneously--it is only the upper estimate that is needed in our considerations.  It follows also that the balls $B_{6r}(Z_l))$ form  a covering of the $r$-neighbourhood $\Ec_r=\cup_{X\in \Ec}B_r(X)$ of the set $\Ec.$
\subsection{The Green function and weak solutions}\label{Sect.Green}
\subsubsection{Elliptic equations} In the paper we consider general second order elliptic equations. Unlike the case of analytic or harmonic approximation, the question about the behavior of fundamental solutions at infinity is rather complicated. However for our needs, it suffices to consider the operator in a nice bounded domain $\Omb;$ we accept as $\Omb$  the sufficiently large ball and use the Green function instead.

We consider the  second order formally self-adjoint divergence form  elliptic operator in $\Omb,$
\begin{equation*}
\Lc=-\sum_{j,k}\partial_j a_{j,k}(X)\partial_k.
\end{equation*}
About the matrix of coefficients
\begin{equation*}
  \aF(X)=(a_{j,k}(X))
\end{equation*}
we suppose that it is real, symmetric, continuous, moreover, it is Dini continuous, namely, for $X,Y\in \Omb,$ for a certain continuity modulus $\om_0(t)$ such that
\begin{equation*}
  \om_0(2t)\le A\om_0(t); \, \int_{0}\frac{\om_0(t)}{t}dt<\infty,
\end{equation*}
it satisfies
\begin{equation}\label{Dini}
  |a_{j,k}(X)-a_{j,k}(Y)|\le \om_0(|X-Y|);
\end{equation}
 finally, it is also uniformly elliptic, $\aF(X)\ge\ka>0$ in $\Omb.$

With this operator, we associate the \emph{sesquilinear form}
\begin{equation}\label{sesq.form}
  \ab[u,v]=\ab_{\Omb}[u,v]=\int_{\Omb}\langle\aF(X) \nabla u, \nabla v\rangle dX,
\end{equation}
for $u,v$ in the Sobolev space $H^1(\Omb),$ where (and further on) the angle brackets denote the scalar product in $\C^m$ and the symbol $dX$ is used for the  integration with respect to the $m$-dimensional Lebesgue measure.

For a  measure $\m$ in $\Omb$ with locally finite variation, a function $u$ with locally integrable derivative is called the \emph{weak solution} of the equation $\Lc u=\mu$ if
\begin{equation}\label{weak}
  \ab[u,\psi]=\int_{\Omb}\bar{\psi}(X)d\mu(X)
\end{equation}
for all $\psi\in C_0^\infty(\Om).$

Of course, if the coefficients of the operator $\Lc$ are more regular, say, belong to $C^1,$ and the measure $\mu $ is absolutely continuous with respect to the Lebesgue measure,  $d\mu=g dX,$ the weak solution is, in fact the classical strong solution of the equation $\Lc u=g$ (see, e.g., \cite{Miranda}.)
\subsubsection{The Green function}\label{GreenFunction}
An important role in our construction will be played by the Green function. The existence and main properties of the Green function in our setting are established in the classical papers \cite{LSW}, \cite{GW},  see also \cite{Miranda}.

\begin{thm}\label{Widman}\emph{(Theorem 1.1 in  \cite{GW}).}
  Let $\Lc=-\sum_{j,k}\partial_j a_{j,k}\partial_k$ be an elliptic operator in $\Omb\subset\R^m, \, m\ge 3,$ with the matrix of coefficients, $\aF(X)=(a_{j,k}(X)),$ being bounded and uniformly positive, $\aF,\, \aF^{-1}\in L_\infty(\Omb).$ Then there exists  the  unique Green function $G(X,Y)\ge0$ such that
  \begin{equation}\label{Green}
    \ab[\f(.),G(X,.)]=\f(X), \f\in C_0^{\infty}(\Omb),
  \end{equation}
  where $\ab$ is the sesquilinear form of $\Lc$ see \eqref{sesq.form}; moreover,

  \begin{equation}\label{upper}
    G(X,Y)\le K_1|X-Y|^{2-m}, \, X,Y\in\Omb,
  \end{equation}
  and for $|X-Y|<\frac12\dist(X,\partial\Omb),$
  \begin{equation}\label{lower}
    G(X,Y)\ge K_2|X-Y|^{2-m},
  \end{equation}
  where the constants $K_1,K_2$ depend only on $\|\aF\|_{L_{\infty}}, \|\aF^{-1}\|_{L_{\infty}}$ (and $\Omb$.)
\end{thm}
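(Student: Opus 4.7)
The plan is to construct $G(\cdot,Y)$ as the limit of solutions $G^\rho(\cdot,Y)$ of regularized problems, and then to extract the pointwise bounds via the De Giorgi--Nash--Moser machinery (only the $L^\infty$-boundedness and uniform ellipticity of $\aF$ enter here, not the Dini modulus \eqref{Dini}). Fix $Y\in\Omb$ and, for small $\rho$, set $f_\rho=|B_\rho(Y)|^{-1}\chi_{B_\rho(Y)}\in L^\infty(\Omb)$. The form $\ab[\cdot,\cdot]$ of \eqref{sesq.form} is bounded and coercive on $H^1_0(\Omb)$ (uniform ellipticity together with Poincar\'e's inequality on the bounded domain $\Omb$), so Lax--Milgram yields a unique $G^\rho(\cdot,Y)\in H^1_0(\Omb)$ with $\ab[G^\rho(\cdot,Y),\psi]=\int_{\Omb}\bar\psi\, f_\rho\,dX$ for every $\psi\in H^1_0(\Omb)$. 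Testing with $\psi=-\min(G^\rho(\cdot,Y),0)\in H^1_0(\Omb)$ gives $G^\rho\ge 0$ almost everywhere.

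The core step is the uniform upper bound $G^\rho(X,Y)\le K_1|X-Y|^{2-m}$ for $\rho<|X-Y|/4$. Away from $\supp f_\rho$ the function $G^\rho(\cdot,Y)$ is a non-negative weak solution of $\Lc u=0$, and Moser's local boundedness gives
\[
\sup_{B_r(Y)\setminus B_{r/2}(Y)}G^\rho\le C r^{-m/2}\|G^\rho\|_{L^2(B_{2r}(Y)\setminus B_{r/4}(Y))},
\]
with $C$ depending only on the ellipticity constants and the dimension. To convert the $L^2$ norm into a pointwise bound, I would test the defining equation for $G^\rho$ with carefully chosen truncations of $G^\rho$ itself and exploit the Sobolev embedding $H^1_0(\Omb)\hookrightarrow L^{2m/(m-2)}(\Omb)$. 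This Moser-type iteration, in the form used in \cite{LSW,GW}, first produces the weak-type estimate $|\{X\in\Omb:G^\rho(X,Y)>t\}|\le C t^{-m/(m-2)}$; substituting it back into the local boundedness estimate yields the claimed pointwise bound on every dyadic annulus centered at $Y$.

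For the lower bound, fix $X$ with $r:=|X-Y|<\tfrac12\dist(X,\partial\Omb)$ and $\rho\ll r$. Testing the equation for $G^\rho(\cdot,Y)$ against a cutoff $\eta\in C_0^\infty(B_r(Y))$ with $\eta\equiv 1$ on $B_{r/2}(Y)$ gives $\int\langle\aF\nabla G^\rho,\nabla\eta\rangle\,dX=1$, and the Cauchy--Schwarz inequality combined with the upper bound already established forces $\int_{B_r(Y)\setminus B_{r/2}(Y)}|\nabla G^\rho|^2\,dX\ge c_0\, r^{2-m}$. A Caccioppoli inequality converts this into an average lower bound $r^{-m}\int_{B_r(Y)\setminus B_{r/2}(Y)}G^\rho\,dX\ge c_1\, r^{2-m}$, and Moser's Harnack inequality (propagated, if needed, along a chain of overlapping balls of comparable radius) promotes this into the pointwise bound $G^\rho(X,Y)\ge K_2\, r^{2-m}$.

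Finally, letting $\rho\to 0$: the uniform two-sided pointwise bounds and the uniform interior H\"older estimates of De Giorgi--Nash render $\{G^\rho(\cdot,Y)\}_\rho$ precompact in $C(K)$ for every compact $K\subset\Omb\setminus\{Y\}$. Any subsequential limit $G(X,Y)$ inherits \eqref{upper} and \eqref{lower} and, since $\int\bar\varphi f_\rho\,dX\to\overline{\varphi(Y)}$ for $\varphi\in C_0^\infty(\Omb)$ together with the symmetry of $\aF$, satisfies \eqref{Green} after passing to the limit in the weak formulation. Uniqueness follows because the difference $h$ of two candidates satisfies $\ab[\varphi,h]=0$ for all $\varphi\in C_0^\infty(\Omb)$, hence for all $\varphi\in H^1_0(\Omb)$ by density; combined with $h\in H^1_0(\Omb)$, Lax--Milgram forces $h\equiv 0$. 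I expect the principal obstacle to be the uniform upper bound: controlling the Moser-iteration constants across every dyadic scale down to the regularization scale $\rho$, \emph{independently} of $\rho$, is the delicate part of the argument; the lower bound and the limiting procedure are comparatively routine once the upper bound is in hand.
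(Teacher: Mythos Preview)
The paper does not supply its own proof of this statement: it is quoted verbatim as Theorem~1.1 of Gr\"uter--Widman \cite{GW} and used as a black box. There is therefore no ``paper's proof'' to compare against.

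Your sketch is essentially the Gr\"uter--Widman construction itself: regularize the point mass by $f_\rho=|B_\rho(Y)|^{-1}\chi_{B_\rho(Y)}$, solve by Lax--Milgram, derive the weak-$L^{m/(m-2)}$ bound by testing with truncations, upgrade to the pointwise upper bound via Moser's local boundedness on dyadic annuli, obtain the lower bound from Caccioppoli plus Harnack, and pass to the limit using interior De~Giorgi--Nash H\"older regularity. This is the correct outline, with the right identification of the delicate step (the $\rho$-uniform weak-type estimate feeding the iteration).

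One genuine gap: in your uniqueness argument you assert that the difference $h$ of two candidate Green functions lies in $H^1_0(\Omb)$. This is not automatic, since $G(X,\cdot)$ itself is \emph{not} in $H^1_0(\Omb)$ for $m\ge 3$ (its gradient is not square-integrable near the pole). What one actually has from the construction is the uniform bound $G^\rho(\cdot,Y)\in W^{1,p}_0(\Omb)$ for every $p<\tfrac{m}{m-1}$, independently of $\rho$; the limit $G$ inherits this. The difference $h$ then satisfies $\ab[\varphi,h]=0$ for all $\varphi\in C_0^\infty(\Omb)$ and lies in $W^{1,p}_0$; a removable-singularity argument (Serrin-type, using the upper bound \eqref{upper}) shows $h$ is a weak solution of $\Lc h=0$ across $Y$, hence $h\in H^1_{\mathrm{loc}}$ by Caccioppoli, and then $h\in H^1_0(\Omb)$, forcing $h\equiv 0$. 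You should insert either the $W^{1,p}_0$ estimate or the removable-singularity step explicitly.
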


Additionally, if the coefficients of $\Lc$ are Dini continuous, see \eqref{Dini}, the following  estimate holds (see Theorem 3.3 in \cite{GW}):
\begin{thm} Suppose that the boundary of $ \Omb$ is sufficiently regular. Then 
\begin{equation}\label{deriv.1G}
 |\nabla_X G(X,Y)\le C|X-Y|^{1-m},\,  |\nabla_Y G(X,Y)\le C|X-Y|^{1-m},
\end{equation}
  \begin{equation}\label{deriv.2G}
    |\nabla_X\nabla_Y G(X,Y)|\le C |X-Y|^{-m},
  \end{equation}
  for all $X,Y\in\Omb.$
\end{thm}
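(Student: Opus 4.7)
The plan is to derive \eqref{deriv.1G} and \eqref{deriv.2G} from the pointwise size estimate \eqref{upper} by combining it with an interior gradient estimate for weak solutions of $\Lc u = 0$ with Dini continuous coefficients, applied away from the diagonal $\{X=Y\}$.

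\textbf{Step 1: Interior gradient estimate.} First I would record the following regularity fact, which is the essential ingredient: if $\aF$ satisfies the Dini condition \eqref{Dini} and $u \in H^1(B_{2r}(X_0))$ is a weak solution of $\Lc u = 0$ in $B_{2r}(X_0) \subset \Omb$, then $u$ is $C^1$ in $B_r(X_0)$ and
\[
\sup_{B_r(X_0)} |\nabla u| \le \frac{C}{r}\sup_{B_{2r}(X_0)} |u|,
\]
with $C$ depending only on $m$, the ellipticity constant $\ka$, and the Dini modulus $\om_0$. This is a Dini-coefficient version of Schauder's interior estimate; I would either quote it from the linear elliptic regularity literature (e.g., Burch, or Lieberman's treatment of Dini continuous data) or prove it by freezing coefficients at $X_0$, comparing $u$ with a solution $u_0$ of the constant-coefficient operator $\Lc_{X_0}$ on concentric balls, and iterating the resulting Campanato-type estimate using the summability of $\om_0(t)/t$.

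\textbf{Step 2: First-order estimate.} For fixed $Y \in \Omb$, the function $X \mapsto G(X,Y)$ is a weak solution of $\Lc u = 0$ in $\Omb \setminus \{Y\}$ (by \eqref{Green} tested against $\f \in C_0^\infty(\Omb\setminus\{Y\})$). Given $X \ne Y$ in $\Omb$, set $r = \tfrac{1}{4}|X-Y|$ and $B = B_r(X)$; then $B_{2r}(X)$ avoids $Y$ and, if $X$ is away from $\pd\Omb$, lies inside $\Omb$. The interior estimate of Step~1 combined with the Grüter–Widman upper bound \eqref{upper} gives
\[
|\nabla_X G(X,Y)| \le \frac{C}{r}\sup_{Z \in B_{2r}(X)} G(Z,Y) \le \frac{C'}{|X-Y|} \cdot |X-Y|^{2-m} = C'|X-Y|^{1-m}.
\]
The boundary case (when $X$ is close to $\pd\Omb$) is handled by the assumed regularity of $\pd\Omb$, using a boundary version of the gradient estimate. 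The bound for $\nabla_Y G(X,Y)$ follows by the symmetry $G(X,Y) = G(Y,X)$ guaranteed by the formal self-adjointness of $\Lc$.

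\textbf{Step 3: Mixed second-order estimate.} Fix $X \ne Y$ and let $r = \tfrac{1}{4}|X-Y|$. Differentiating the identity $\Lc_X G(X,Y) = \de_Y$ in $Y$ formally, or more rigorously noting that for each component $j$ the function $Y \mapsto \pd_{X_j} G(X,Y)$ is a weak solution of $\Lc_Y v = 0$ on $B_{2r}(Y)$ (since $\Lc$ is self-adjoint, and $X \notin B_{2r}(Y)$), I apply the interior gradient estimate of Step~1 in the $Y$-variable:
\[
|\nabla_Y \pd_{X_j} G(X,Y)| \le \frac{C}{r}\sup_{Y' \in B_{2r}(Y)} |\pd_{X_j} G(X,Y')|.
\]
By Step~2 applied at $(X,Y')$ with $|X-Y'| \asymp |X-Y|$, the supremum is bounded by $C|X-Y|^{1-m}$, yielding $|\nabla_X\nabla_Y G(X,Y)| \le C|X-Y|^{-m}$.

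\textbf{Main obstacle.} The technical heart is Step~1: obtaining a pointwise gradient bound for weak solutions when the coefficients are only Dini continuous (not Hölder). The usual Schauder route produces a logarithmic loss; to recover the clean $1/r$ bound one has to exploit the summability $\int_0 \om_0(t)/t\, dt < \infty$ by iterating a Campanato excess-decay estimate over dyadic scales and summing the resulting series. A secondary issue is justifying that $\pd_{X_j} G(X,\cdot)$ is indeed a weak solution of the adjoint equation away from $X$; this I would verify by an approximation argument, smoothing $G$ by convolution in $X$ and passing to the limit using the first-order bound from Step~2.
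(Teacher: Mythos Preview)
The paper does not prove this theorem at all: it is stated as a quotation from the literature, with the explicit attribution ``see Theorem~3.3 in \cite{GW}'' (Gr\"uter--Widman). So there is no proof in the paper to compare against.

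That said, your outline is correct and is essentially the argument one finds in \cite{GW} itself: the pointwise bound \eqref{upper} is upgraded to a gradient bound via an interior $C^1$ estimate for $\Lc$-harmonic functions with Dini coefficients, applied to $G(\cdot,Y)$ on a ball of radius comparable to $|X-Y|$; the mixed second derivative is then obtained by iterating this in the second variable. The point you flag as the main obstacle---the interior gradient estimate under a mere Dini condition---is exactly what \cite{GW} establishes (their Theorems~2.4 and~3.3), and your proposed Campanato iteration is the standard route. One small remark on Step~3: rather than arguing that $\partial_{X_j}G(X,\cdot)$ is itself a weak solution, it is cleaner (and avoids the approximation you mention) to work with the finite difference $h^{-1}(G(X+he_j,\cdot)-G(X,\cdot))$, which is manifestly $\Lc_Y$-harmonic away from $X$ and $X+he_j$, apply the interior gradient estimate to it uniformly in $h$, and then let $h\to 0$.
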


We will use these results for the case when $\Omb$ is a (sufficiently large) ball containing the compact set $\Kb$ under consideration, therefore, the regularity conditions are surely satisfied.

The Green function $G(X,Y)$ produces the solution of the Dirichlet  problem:
for a function $f\in L_2(\Omb),$ the scalar product
\begin{equation*}
  u(X)=\int_{\Om}G(X,Y)f(Y) dY\equiv \langle f(.), G(X,.)\rangle
\end{equation*}
defines the (unique) weak solution in $\overset{\circ}{H}{ }^1$ of the equation
$\Lc u=f.$ Moreover,
by Theorem 6.1 in \cite{LSW}, for any measure $\mu$ on $\Omb,$ having bounded variation, the integral
\begin{equation*}
  u(X)=(G(X,.),\mu(.))\equiv \int_{\Omb}G(X,Y)d\mu(Y)
\end{equation*}
represents a weak solution of the equation $\Lc u=\mu,$ vanishing on $\partial\Omb.$


The Green function will  be used in the analysis of the integral representations related to the operator $\Lc$ and in constructing the required approximation.
The standard integral representation is valid for an elliptic equation for a domain $\Om'\subset\Omb,$ see (see, e.g.,  (9.3) in \cite{Miranda}):

\begin{equation}\label{int.repr}
  u(X)=\ab[u(.),G(X,.)]-\int_{\partial\Om'}G(X,Y)\partial_{\n_{\aF}(Y)}u(Y) d\s(Y).
\end{equation}
 This formula  involves the boundary term, usually  called the single layer potential, containing the conormal derivative of $u$ on a surface $\G,$ the boundary of $\Om'$
\begin{equation}\label{conormal}
\partial_{\n_\aF(X)}u(X):=\sum_{j,k}a_{j,k}(X)\n_j(X)\partial_k u(X)=\langle\aF(X)\nabla u(X),\n(X)\rangle, \, X\in \G,
\end{equation}
 where, recall, the angle parentheses denote the standard scalar product in $\C^{m}$ and $\n_j(X)$ are components of the unit normal vector $\n(X)$ of $\G$.
It is possible that a part  of the  boundary of $\Om'$ lies on the boundary of $\Omb;$ in this case the integral over the corresponding part of $\G$ in \eqref{int.repr} vanishes.

\section{The constructive description. Sufficient conditions}\label{Sect.Sufficient}
Further on, we consider the space $C^{\om}(\Kb)$  of continuous functions on the compact set $\Kb$ with continuity modulus, a concave function $\om(t),$ with decaying $\frac{\om(t)}{t},$   satisfying
\begin{equation}\label{omega}
  \int_0^t\frac{\om(\ta)}{\ta}d\ta\le C'\om(\ta),\, \int_t^\infty\frac{\om(\ta)}{\ta^2}d\ta\le C''\frac{\om(t)}{t},
\end{equation}
for some constants $C',C''.$ 
\begin{thm}\label{thm.suff}
Let $\Kb\subset\R^m$ be a compact set with zero Lebesgue measure and let $\om(t)$ be a continuity modulus satisfying \eqref{omega}. For $\de>0,$ denote by $\Kb_\de$ the $\de$-neighborhood of $\Kb$ in $\R^m.$ Suppose that for a function $f(X),$ $X\in\Kb,$ for any  $\de>0$ (equivalently, for a sequence of such $\de$ tending to zero), there exists a function $v_\de$ on $\Kb_\de$ such that
\begin{equation}\label{cond1}
|f(X)-v_\de(X)|\le c_1\om(\de), \, X\in\Kb,
\end{equation}
and
\begin{equation}\label{cond2}
  |\nabla v_\de(X)|\le c_2\om(\de)\de^{-1},\, X\in\Kb_\de,
\end{equation}
with some constants $c_1,c_2$ \emph{(}depending, generally, on the function $f$\emph{)}. Then $f\in C^{\om}(\Kb).$
\end{thm}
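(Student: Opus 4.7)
The plan is to deduce the H\"older bound on $\Kb$ by choosing, for each pair $X,Y\in\Kb$, the scale $\de:=|X-Y|$ and inserting the approximating function $v_\de$ as a bridge between $f(X)$ and $f(Y)$. The two hypotheses then cooperate in a very direct way: \eqref{cond1} controls the replacement of $f$ by $v_\de$ on the set $\Kb$, while \eqref{cond2} controls the increment of $v_\de$ across the straight segment joining $X$ to $Y$.

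The key geometric observation is that every point $Z=(1-t)X+tY$, $t\in[0,1]$, on that segment satisfies $\min\{|Z-X|,|Z-Y|\}\le\de/2<\de$, hence lies in $\Kb_\de$, so the gradient bound \eqref{cond2} is valid along the whole segment. Since \eqref{cond2} forces $v_\de$ to be Lipschitz on each line intersected with $\Kb_\de$, the fundamental theorem of calculus would give
\begin{equation*}
|v_\de(X)-v_\de(Y)|\le\int_0^1|\nabla v_\de((1-t)X+tY)|\,|X-Y|\,dt\le c_2\,\frac{\om(\de)}{\de}\cdot\de=c_2\,\om(\de).
\end{equation*}
Combining this with \eqref{cond1} applied at $X$ and at $Y$, the triangle inequality yields
\begin{equation*}
|f(X)-f(Y)|\le 2c_1\,\om(\de)+c_2\,\om(\de)=(2c_1+c_2)\,\om(|X-Y|),
\end{equation*}
which is exactly the membership $f\in C^{\om}(\Kb)$.

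The only loose end is that the hypothesis is stated for sufficiently small $\de$, so for $|X-Y|\ge\de_0$ I would invoke that \eqref{cond1} applied at one scale $\de_0$ forces $f$ to be bounded on $\Kb$, while $\om(|X-Y|)\ge\om(\de_0)>0$, so the large-scale case is automatic with a possibly larger constant. I do not foresee any genuine obstacle here; the introduction itself flags this implication as elementary. The whole argument is really just the observation that coupling the scale to the geometry, $\de=|X-Y|$, converts the two a priori unrelated controls \eqref{cond1} and \eqref{cond2} into a H\"older-type estimate with exactly the prescribed modulus $\om$, without any use of the smallness of $\Kb$ or of the Dini conditions \eqref{omega}, both of which are reserved for the much harder necessary direction.
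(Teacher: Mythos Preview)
Your argument is correct and matches the paper's proof essentially line for line: both insert $v_\de$ as a bridge via the triangle inequality, observe that the straight segment joining the two points of $\Kb$ lies in $\Kb_\de$, and integrate the gradient bound \eqref{cond2} along that segment. The only cosmetic difference is that the paper fixes $\de$ first and considers points with $|X_1-X_2|<\de/2$ (then uses the doubling $\om(\de)\le C\om(\de/2)$), whereas you set $\de=|X-Y|$ directly and avoid that extra step; your remark that neither the zero-measure hypothesis nor the conditions \eqref{omega} are actually used here is also accurate.
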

\begin{proof}Recall that  the open ball with radius $\de$ and center $X$ is denoted  by $B_\de(X).$ For a given $\de>0,$ we take two arbitrary points  $X_1,X_2\in\Kb$ such that $|X_1-X_2|< \frac{\de}{2}.$ Then both balls $B_{\de}(X_j)$ lie inside $\Kb_\de$ and, therefore, the straight segment connecting $X_1$ and $X_2$ lies inside $\Kb_\de$ as well.  Let $\pmb{\n}=\frac{X_1-X_2}{|X_1-X_2|}.$ Then
\begin{gather*}
  |f(X_2)-f(X_1)|\le |f(X_2)-v_\de(X_2)|+|v_\de(X_2)-v_\de(X_1)|+|v_\de(X_1)-f(X_1)|\\\nonumber \le 2 c_1 \om(\de)+\frac{\de}{2}\int_0^1|\nabla v_\de(X_1+\frac{\de}{2}t\pmb{\n})|dt\le
  \\\nonumber 2 c_1 \om(\de)+c_2\frac{\de}{2}\frac{\om(\de)}{\de}\le C\om(\frac{\de}{2}).
\end{gather*}
\end{proof}
We can see that there are no requirements that the approximating functions $v_\de$ are solutions to some equation or whatever else. The remaining part of the paper is devoted to constructing, for a given function $f\in C^{\om}(\Kb)$ and an arbitrary $\de$ such approximating functions, satisfying \eqref{cond1}, \eqref{cond2}, which  are weak solutions of a given second order elliptic equation.

\section{The extension of a continuous function}\label{Sect.Ext}

It is  known since long ago, see \cite{McShane}, \cite{Whitney}, that a continuous function on a compact set in the Euclidean space, with a given continuity modulus $\om(t)$, admits an extension to the whole space with the same continuity modulus. The following statement provides us with an extension which, additionally, is infinitely differentiable outside $\Kb$, with  the first order derivative  controlled by this   continuity modulus  and by  the distance to the compact. The continuity modulus $\om(t)$ is fixed, and it is supposed to be a concave function with decaying $\frac{\om(t)}{t},$ satisfying \eqref{omega}

In the following reasoning (and further on in the paper), the symbol $c$ without subscript  denotes constants whose value is of no importance, somehow depending on the function $f$ and the set $\Kb$, which may vary from one formula to another one and even within one and the same formula.
For a point $X\in\R^m\setminus\Kb$ we denote  by $d(X)$ the distance of $X$ to $\Kb.$
\begin{lem}\label{lem.ext}
Let $\Kb\subset\R^m,$ $m\ge3,$ be a compact set with connected complement in $\Omb$. Let $f$ be a continuous function, $f\in C^\om(\Kb)$ with continuity modulus $\om.$ Then there exists an extension $f_0$ of $f$, $f_0\in C_0(\R^m)\cap C^\infty(\R^m\setminus \Kb)$, such that
\begin{equation}\label{Ext1}
|f_0(X_1)-f_0(X_2)\le c\om(|X_1-X_2|); \, X_1,X_2\notin\Kb
\end{equation}
\begin{equation}\label{Ext2}
|\nabla f_0(X)|\le c\frac{\om(d(X))}{d(X)}, \, X\notin\Kb;
\end{equation}
\end{lem}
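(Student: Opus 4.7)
The plan is to realise $f_0$ by a classical Whitney-type construction adapted to the modulus $\om$. First, I would extend $f$ continuously to all of $\R^m$ by the McShane formula, for instance $\tilde f(X)=\inf_{Y\in\Kb}\bigl(f(Y)+c\,\om(|X-Y|)\bigr)$ (afterwards cut off outside a neighbourhood of $\Kb$ to ensure compact support), so that $|\tilde f(X_1)-\tilde f(X_2)|\le c\,\om(|X_1-X_2|)$ on all of $\R^m$. This reduces the task to upgrading the smoothness of the extension on $\R^m\setminus\Kb$ while maintaining control on the gradient.

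Next I would introduce a Whitney decomposition of $\R^m\setminus\Kb$ into a locally finite family of dyadic cubes $\{Q_j\}$ with $\diam Q_j \sim \dist(Q_j,\Kb)$, a slightly dilated family $\{Q_j^{\ast}\}$ of bounded overlap multiplicity, and an associated smooth partition of unity $\{\vf_j\}$ subordinate to $\{Q_j^{\ast}\}$ satisfying $|\nabla\vf_j|\le c/\diam Q_j$. For each $j$ I would select a point $Y_j\in\Kb$ with $|Y_j-Q_j|\le c\,\dist(Q_j,\Kb)$, set $a_j=f(Y_j)$, and define
\begin{equation*}
f_0(X)=\sum_{j}\vf_j(X)\,a_j,\ X\in\R^m\setminus\Kb,\qquad f_0(X)=f(X),\ X\in\Kb.
\end{equation*}
By construction $f_0\in C^{\infty}(\R^m\setminus\Kb)$, and continuity across $\Kb$ is immediate because every contributing $Y_j$ tends to the limit point as $X\to X_0\in\Kb$.

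The gradient estimate \eqref{Ext2} is the core of the argument. At a point $X$ with $d(X)=r$, only boundedly many indices $j\in J(X)$ satisfy $X\in\supp\vf_j$; for such $j$ the associated $Y_j$ lie within distance $c\,r$ of each other. Using the key cancellation $\sum_j\nabla\vf_j(X)=0$, I would write, for an arbitrary $j_0\in J(X)$,
\begin{equation*}
\nabla f_0(X)=\sum_{j\in J(X)}(a_j-a_{j_0})\,\nabla\vf_j(X),
\end{equation*}
and then combine $|a_j-a_{j_0}|=|f(Y_j)-f(Y_{j_0})|\le \om(|Y_j-Y_{j_0}|)\le c\,\om(r)$ with $|\nabla\vf_j|\le c/r$ and the bounded overlap to conclude $|\nabla f_0(X)|\le c\,\om(r)/r$, which is \eqref{Ext2}.

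Finally I would deduce \eqref{Ext1} by splitting into two cases. If $|X_1-X_2|\le\tfrac12 d(X_1)$, the segment $[X_1,X_2]$ stays in $\{d(X)\ge \tfrac12 d(X_1)\}$, and integrating \eqref{Ext2} along it gives
\begin{equation*}
|f_0(X_1)-f_0(X_2)|\le c\,\frac{\om(d(X_1))}{d(X_1)}|X_1-X_2|\le c\,\om(|X_1-X_2|),
\end{equation*}
using that $\om(t)/t$ is nonincreasing. In the opposite case $|X_1-X_2|>\tfrac12 d(X_1)$, I would pick $Y_i\in\Kb$ with $|Y_i-X_i|=d(X_i)$, note $|Y_1-Y_2|\le 5|X_1-X_2|$, and bound $|f_0(X_i)-f(Y_i)|$ by integrating the gradient along a radial path and invoking the first Dini bound in \eqref{omega}; combining with $|f(Y_1)-f(Y_2)|\le \om(|Y_1-Y_2|)$ completes the estimate. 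The main obstacle is the gradient bound near $\Kb$: without the cancellation $\sum\nabla\vf_j=0$ the naive estimate would be worse than $\om(r)/r$ by a diverging factor; the assumptions \eqref{omega} on $\om$ are what allow the radial integration step to close up cleanly when the geodesic between $X_1$ and $X_2$ approaches $\Kb$.
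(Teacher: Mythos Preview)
Your argument is correct and complete, but it follows a genuinely different construction from the paper's. The paper also begins with a McShane--Whitney extension $f_1\in C^{\om}(\R^m)$, but then smooths it by a \emph{variable-radius mollification}: using a $C^\infty$ regularisation $d_0$ of the distance function (as in Stein), it sets $\rho(X)\asymp d(X)$ and defines
\[
f_0(X)=\int_{\R^m}\Kc(X,Y)\,f_1(Y)\,dY,\qquad \Kc(X,Y)=\rho(X)^{-m}\,\hb\bigl(\rho(X)^{-2}|X-Y|^2\bigr),
\]
so the averaging radius shrinks to zero as $X\to\Kb$. The gradient estimate \eqref{Ext2} is then obtained by subtracting the constant $f(X_0)$ (with $X_0$ a nearest point of $\Kb$) before differentiating under the integral; this freezing-of-the-constant trick plays exactly the role that your cancellation $\sum_j\nabla\vf_j=0$ plays in the Whitney scheme. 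Your approach is the more combinatorial classical route and avoids regularising the distance function; the paper's mollification, on the other hand, delivers the higher-order bounds $|\nabla^k f_0(X)|\le c\,\om(d(X))/d(X)^k$ (stated in the Remark after the lemma) for free, by simply differentiating $\Kc$ repeatedly.

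Two small remarks on your write-up. First, your opening McShane extension $\tilde f$ is never actually used: you build $f_0$ directly from the values $a_j=f(Y_j)$ on $\Kb$, so that paragraph can be dropped. Second, in your case~2 for \eqref{Ext1} you implicitly need $d(X_2)\le c\,|X_1-X_2|$ as well; this follows from $d(X_2)\le d(X_1)+|X_1-X_2|<3|X_1-X_2|$, but you should state it (and the bound on $|Y_1-Y_2|$ then comes out as $6|X_1-X_2|$ rather than $5$).
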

\begin{rem} In fact, for any $k>0,$ the estimate $\nabla^kf_0(X)=O(\frac{\om(d(X))}{d(X)^k})$ holds.
\end{rem}
\begin{proof}The construction of $f_0$ goes in the following way. Let $f_1\in C_0(\R^m)$ be the McShane-Whitney continuation of $f$ (see \cite{McShane}, \cite{Whitney}) such that
\begin{equation*}
  |f_1(X_1)-f_1(X_2)|\le c \om(|X_1-X_2|), \, X_1,X_2\in\R^m.
\end{equation*}
There exists (see, e.g. \cite{stein}) a smooth approximation $d_0(X)>0,$ $d_0\in C^{\infty}(\R^m\setminus \Kb),$  of the distance function $d(X)$,  such that
\begin{equation}\label{Ext4}
  c_1 d(X)\le d_0(X)\le c_2 d(X), \, X\in \R^m\setminus\Kb,
\end{equation}
and also
\begin{equation*}
  |\grad d_0(X)|\le c,\, |\grad^2 d_0(X)|\le\frac{c}{d(X)},
\end{equation*}with constant $c$ depending only on the compact $\Kb.$

We set $\ro(X)=\frac{d_0(X)}{2c_2},$ where $c_2$ is the constant in \eqref{Ext4}, so that $\ro(X)\le \frac{d(X)}2,$
and denote   $\f(X)=\ro(X)^{-2},\, X\notin\Kb;$ obviously, \begin{equation}\label{fi.est}
|\nabla\f(X)|\le c d(X)^{-3}.
\end{equation}
We also set $\p(X)=\f(X)^{m/2}=\ro(X)^{-m};$ for the derivatives of $\p,$ we have
\begin{equation}\label{psi.est}
  |\nabla \p(X)|\le c d(X)^{-m-1}.
\end{equation}

The  function whose existence is  proclaimed in Lemma \ref{lem.ext}  is constructed as  the averaging  of the function $f_1$ with variable radius depending on $\ro(X).$

We denote by $\Bb(X)$ the ball $B_{\ro(X)}(X),$ centered at the point $X\notin\Kb$ and with radius $\ro(X);$ this ball satisfies $\dist(\Bb(X), \Kb)\ge d(X)/2$. Let  the even function $\hb\ge 0,$ $\hb(r)\in C^\infty_0((-1,1))$, satisfy
\begin{equation*}
  \int_{\R^m}\hb(|X|^2)dX=1.
\end{equation*}
Now we introduce the mollification kernel $\Kc(X,Y),$ smooth in both variables, by
\begin{equation*}
  \Kc(X,Y)=\p(X)\hb(\f(X)|X-Y|^2),
\end{equation*}
thus,
\begin{equation*}
   \int_{\R^m}\Kc(X,Y)dY \equiv \int_{\Bb(X)}\Kc(X,Y)dY=1.
\end{equation*}

Using this kernel, we introduce the mollified  function $f_0$ by setting
\begin{equation*}
  f_0(X)=\int_{\R^m}\Kc(X,Y)f_1(Y)dY\equiv\int_{\Bb(X)}\Kc(X,Y)f_1(Y)dY,
\end{equation*}
 so, $f_0$ is a variable radius mollification of the  function $f_1$ with radius vanishing as the point approaches $\Kb$ (in a somewhat different  setting, the idea of using  a variable radius mollification was earlier implemented by E. Dyn'kin, \cite{Dyn76}, and V. Burenkov, \cite{Bur}). The smoothness of  $f_1$ follows from the smoothness of the kernel $\Kc.$

For a given point $X\notin \Kb,$ let $X_0$ be (one of) the point(s) in $\Kb,$ closest to $X,$ $d(X)=|X-X_0|.$
 Then
 \begin{equation}\label{diff.1}
   f(X)-f(X_0)=\int_{\Bb(X)}\Kc(X,Y)(f_1(Y)-f(X_0))dY.
 \end{equation}
Since for $Y\in \Bb(X)$ we have $|Y-X_0|\le |Y-X|+|X-X_0|\le c d(X),$ it follows
\begin{equation}\label{diff.2}
  |f_1(Y)-f(X_0)|\le c\om(d(X)),
\end{equation}
in particular, for $Y=X,$ we have
\begin{equation}\label{diff.3}
  |f_1(X)-f(X_0)|\le c \om(d(X)).
\end{equation}
This implies that $f_0$ is, in fact,  a continuous extension of the function $f$ to $\R^m.$

Our aim now is to estimate the derivatives of $f_0.$

For any constant $\cb,$  the derivatives of $f_0$ and of $f_0^{\cb}:= f_0-\cb$, of course, coincide. Therefore, when estimating derivatives of $f_0$ at the point $X\in\R^m\setminus\Kb,$  we may handle the function $f_0^{\cb}$  instead,   using the value $\cb=f(X_0)$ (where, recall, $X_0$ is the closest to $X$ point in $\Kb$). We keep in mind here that, by \eqref{diff.2} and \eqref{diff.3},
\begin{equation}\label{fcb}
  |f_0^{\cb}(Y)|\le c \om(d(X)), \,\mbox{for}\, Y\in \Bb(X).
\end{equation}

So, we have
\begin{equation*}
\nabla f_0^{\cb}(X) =\int_{\R^m}\nabla_X\Kc(X,Y)f_1^{\cb}(Y)dY
=\int_{\Bb(X)}\nabla_X\Kc(X,Y) f_1^{\cb}(Y)dY.
\end{equation*}
Since $|f_1^{\cb}(Y)|=|f_1(Y)-f(X_0)|\le c \om(d(X))$ for $Y\in \Bb(X)$,
\begin{equation*}
|\nabla f_0(X)|\le c\om(d(X))\int_{\Bb(X)}\left|\nabla_X\Kc(X,Y)\right|dY.
\end{equation*}
For the derivatives of the kernel $\Kc,$ we have
\begin{gather}\label{deriv.3}
 \nabla_X\Kc(X,Y)=\\\nonumber
  (\nabla\p(X))\hb(\f(X)|X-Y|^2)+\p(X)\nabla_X\hb(\f(X)|X-Y|^2);
 \end{gather}
 where
 \begin{equation}\label{deriv.3.0}
  \nabla_X\hb(\f(X)|X-Y|^2)= \hb'(\f(X)|X-Y|^2)
  \left(\nabla\f(X)|X-Y|^2+2\f(X)(X-Y)\right).
 \end{equation}
 Using \eqref{fi.est}, \eqref{psi.est}, and the fact that $|X-Y|\le \ro(X)$, we obtain from \eqref{deriv.3}, \eqref{deriv.3.0} that
 \begin{equation*}
   |\nabla_X \Kc(X,Y)|\le c d(X)^{-m-1},
 \end{equation*}
 and, therefore,
 \begin{equation*}
   \int_{\Bb(X)}|\nabla_X \Kc(X,Y)|\le c d(X)^{-1}.
 \end{equation*}
 We recall \eqref{fcb}, which gives us
 \begin{equation*}
  |\nabla f_0(X)| \equiv |\nabla f_0^{\cb}(X)|\le c \frac{\om(d(X))}{d(X)}.
 \end{equation*}
\end{proof}
Finally, we can take a smooth function $\chi(X)\in C^{\infty}_0(\Omb), $
$\chi(X)=1$ in a neighborhood of $\Kb$ and consider $\chi  f_0$ instead of $f_0,$ to have a continuation of $f$ with compact support in $\Omb,$ with all the required properties.

\section{Integral estimates for  the continuity modulus}\label{Sect.Estimates} In this section we derive important estimates of some integrals involving the continuity modulus $\om(t)$ and the extended function $f_0$ constructed in Sect. \ref{Sect.Ext}. Let, as before, $\Omb$ be an open ball in $\R^m,$ such that $\Kb\subset \Omb.$ Suppose that $\Lc$ is an elliptic operator in $\Omb$ with coefficients satisfying the conditions in Sect. \ref{Sect.Green} and let $G(X,Y)$ be the Green function for $\Lc$ in $\Omb.$

We suppose that our compact set $\Kb$ satisfies the following condition.

 \textbf{Condition A.} For any $\de>0$  there exists a set $\Zc(\de)$ of points $Z_l\in \Kb,$ $l=1,\dots, M(\de),$ such that the distance between any two points in $\Zc(\de)$ is not less than $\de,$ additionally,  for some constant $c_0,$ the set $\Kb_\de,$ the $\de$-neighborhood of $\Kb$ is covered by the collection of closed balls $B_{c_0\de}(Z_l),$
\begin{equation}\label{cover}
  \Kb_\de\subset\bigcup_{Z_l\in \Zc(\de)}B_{c_0\de}(Z_l)
\end{equation}
and, finally, for any $R\ge\de,$ some constant $c_1$ and for any point $\Xb\in\R^m$, the quantity $M(\Xb,R,\de)$ of the points of $\Zc(\de)$ in the ball $\overline{B_R(\Xb)}$ is not greater than $c_1\left(\frac{R}{\de}\right)^{m-2},$
in particular, the number  $M(\de)$ of balls in $\Zc(\de)$ satisfies
\begin{equation}\label{number of points}
M(\de)
  \le c_1\left(\frac{\wr \Kb\wr}{\de}\right)^{m-2}.
\end{equation}

As follows from Lemma \ref{Lem.cover}, if the set $\Kb$ is $(m-2)$-regular, condition \textbf{A} is satisfied. As the visual example of such sets $\Kb$, one can consider a compact Lipschitz surface with codimension $2$ in $\R^m,$ as well as a finite union of such surfaces.

Having the set $\Zc(\de),$ for a fixed  $c>0,$ we denote by $\Bc_c(\de)$ the union of balls with radius $c\de$ centered at the points $Z_l\in\Zc(\de),$
\begin{equation}\label{Bc(de)}
  \Bc_c(\de)=\bigcup_{Z_l\in\Zc(\de)}B_{c\de}(Z_l).
\end{equation}
 The value of $c$ in \eqref{Bc(de)} will be fixed so that $c>\max(2c_0,c_1),$ where $c_0,c_1$ are the constants in Condition \textbf{A},  and  we will omit it in the notation, writing simply $\Bc(\de):=\Bc_{c}(\de).$ With this choice,  the set $\Bc(\de)$ covers $\Kb_\de.$

Let $f(X)$ be a function on $\Kb$ with the continuity modulus $\om$, i.e., $f\in C^{\om}(\Kb),$ and let  $f_0$ be its extension to $\Omb$ as constructed in Section \ref{Sect.Ext}, in particular, supported in $\Omb.$

For a fixed  point $X\in(\Omb\setminus\Kb),$  we  denote   the distance from $X $ to $\Kb$ by $\de=d(X)$ and  let $\de_1$ satisfy
$2c_0\de_1<\de,$ where $c_0$ is the constant in \eqref{cover}.

We introduce the set $\Theta_{\de_1}\subset \Omb,$
\begin{equation}\label{Theta}
  \Theta_{\de_1}=\Omb\setminus\overline{\Bc(\de_1)}.
\end{equation}
It follows that the initial point $X$ belongs to $ \Theta_{\de_1},$ this means, it lies outside $\overline{\Bc(\de_1)}.$   The distance of the points in  $\Theta_{\de_1}$  to $\Kb$ can be estimated from below,
\begin{equation}\label{d(X1)}
  \dist(\Theta_{\de_1},\Kb )\ge c_0\de_1.
\end{equation}
In fact, to justify \eqref{d(X1)},  we suppose that $\dist(\Kb,\Theta_{\de_1} )< c_0\de_1 $ and consider a point $\Xb$ in the joint boundary of $\Theta_{\de_1}$ and $\Bc(\de_1)$ where \eqref{d(X1)} breaks down. Let $\Xb'$ be  its closest point in $\Kb,$ so that  $\dist(\Xb,\Kb)\equiv d(\Xb)=|\Xb-\Xb'|.$
The point $\Xb'$ must belong to one of the balls $B_{c_0\de_1}(Z_l),$ $Z_l\in \Zc(\de_1).$
Therefore,  for this particular $l,$
\begin{equation*}
  |Z_{l}-\Xb|\le |Z_l-\Xb'|+|\Xb'-\Xb|< c_0\de_1+c_0\de_1=2c_0\de_1.
\end{equation*}
This would means that $\Xb$ is an interior point in $\Bc(\de_1)$ and not a boundary point, contrary to what  was assumed.

The boundary $\Si(\de_1):=\partial(\Bc(\de_1))$ of $\Bc(\de_1)$ is a piecewise smooth surface; it consists of a finite set of domains on spheres in $\R^m,$ having piecewise smooth $m-2$-dimensional boundaries.
We denote by $\s$ the $m-1$ - dimensional surface measure on $\Si({\de_1}) $, generated by the Lebesgue measure on $\R^m$.  Since the surface measure of a sphere equals  $\s(\partial( B_{c\de_1}(Z)))=c_{m}\de_1^{m-1}$ and the quantity $M(\de_1)$  of balls satisfies \eqref{number of points}, we have
\begin{equation}\label{Surface}
  \s(\partial \Bc(\de_1))\le c M(\de_1)\de_1^{m-1}\le c\wr\Kb\wr^{m-2}\de_1^{2-m}\times \de_1^{m-1}=c\de_1.
\end{equation}

Further on, when estimating the expressions in the integral representations to follow, we will use the following lemma.

\begin{lem}\label{lem.modulus.int} For the continuity modulus $\om$ as above, the following estimate holds
  \begin{equation}\label{est.mod}
    \int\limits_{\Bc(\de_1)}\frac{\om(d(Y))}{d(Y)}dY<\infty, \, \de_1>0.
  \end{equation}
\end{lem}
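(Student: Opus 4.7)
The plan is to reduce \eqref{est.mod} to a routine dyadic summation by exploiting the codimension-$2$ packing estimate in Condition~\textbf{A}. The essential mechanism is that Condition~\textbf{A} forces the Lebesgue measure of the $t$-neighbourhood $\Kb_t$ of $\Kb$ to vanish like $t^2$, precisely balancing the $d(Y)^{-1}$ factor once moderated by $\om$.

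First I would establish the volume bound $|\Kb_t|\le c\,t^2$. Applying Condition~\textbf{A} at scale $t$ produces a covering of $\Kb_t$ by $M(t)\le c_1(\wr\Kb\wr/t)^{m-2}$ balls of radius $c_0 t$, each of $m$-dimensional volume $c_m (c_0 t)^m$; multiplying gives $|\Kb_t|\le c\,t^{-(m-2)}\cdot t^m = c\,t^2$.

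Next I would decompose $\Bc(\de_1)$ dyadically by distance to $\Kb$. Since $\Zc(\de_1)\subset\Kb$, every point of $\Bc(\de_1)$ lies within distance $c\de_1$ of $\Kb$, so $d(Y)\le c\de_1$ there. Setting $t_k=2^{-k}c\de_1$ and $A_k=\{Y\in\Bc(\de_1):t_{k+1}<d(Y)\le t_k\}$, I write $\Bc(\de_1)=\bigsqcup_{k\ge 0}A_k$. Two estimates then combine: the volume bound $|A_k|\le |\Kb_{t_k}|\le c\,t_k^2$ from the previous step, and the pointwise inequality $\om(d(Y))/d(Y)\le 2\om(t_k)/t_k$ on $A_k$, which follows from monotonicity of $\om$ and of $t\mapsto\om(t)/t$. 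Hence
\begin{equation*}
  \int_{A_k}\frac{\om(d(Y))}{d(Y)}\,dY\le c\,t_k\,\om(t_k).
\end{equation*}

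Finally, summing in $k$, since $\om$ is a bounded nondecreasing continuity modulus and $\sum_k t_k$ converges geometrically,
\begin{equation*}
  \sum_{k\ge 0}t_k\,\om(t_k)\le \om(c\de_1)\sum_{k\ge 0}t_k\le 2c\de_1\,\om(c\de_1)<\infty,
\end{equation*}
which yields \eqref{est.mod}. I anticipate no real obstacle: the Dini-type conditions \eqref{omega} on $\om$ are not actually needed here, only boundedness of $\om$ near zero; the entire content of the lemma is the codimension-$2$ volume estimate in Step~1, which is exactly tailored to absorb the $d(Y)^{-1}$ singularity of the integrand.
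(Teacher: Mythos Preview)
Your proof is correct and follows essentially the same route as the paper: both extract from Condition~\textbf{A} the volume bound $\meas_m(\Kb_t)\le c\,t^2$, decompose the integration domain dyadically by distance to $\Kb$, and sum the resulting series $\sum_k t_k\,\om(t_k)$ geometrically. The only cosmetic differences are that the paper decomposes via the nested sets $\Bc(2^{1-n}\de_1)$ rather than via the level sets $\{t_{k+1}<d(Y)\le t_k\}$, and that it explicitly remarks that $\Kb$ has Lebesgue measure zero so the points with $d(Y)=0$ can be discarded---a point your decomposition $\Bc(\de_1)=\bigsqcup_k A_k$ silently assumes.
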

\begin{proof} First of all, since the set $\Kb$ has Lebesgue measure zero, we may replace the integration domain in \eqref{est.mod} by $\Bc(\de_1)\setminus \Kb$. We introduce the notation $\de_{n}=2^{1-n}\de_1,$ $n=1,2,\dots;$ for each $n$,  consider the covering of $\Kb$ by the balls with radius $c_1\de_n,$ in accordance with Condition\textbf{ A} (in particular, for an $(m-2)$-regular set $\Kb$, using Lemma \ref{Lem.cover}), and construct  the corresponding neighborhoods  $\Bc(\de_n)$ of the compact set $\Kb$.   Accordingly, the domain $\Bc(\de_1)\setminus\Kb$ admits the representation
\begin{equation*}
  \Bc(\de_1)\setminus\Kb=\bigcup_{n=1}^\infty \Gc_n
\end{equation*}
where $\Gc_n$ denotes $\Bc(\de_n)\setminus \Bc(\de_{n+1}).$ Therefore,
\begin{equation}\label{Int.strip}
  \int\limits_{\Bc(\de_1)\setminus \Kb}\frac{\om(d(Y))}{d(Y)}dY=
\sum_{n=1}^\infty\int_{\Gc_n}\frac{\om(d(Y))}{d(Y)}dY.
\end{equation}
On the set $\Gc_n,$ we have $d(Y)\ge c \de_{n}$ and $\frac{\om(d(Y))}{d(Y)}\le C \frac{\om(\de_{n+1})}{\de_{n+1}}.$ The quantity $M(\de_n)$ of balls composing $\Bc(\de_n)$, by Condition \textbf{A}, is majorated by $\left(\frac{ \wr\Kb\wr}{\de_n}\right)^{m-2},$ therefore,
\begin{equation*}
\meas_m(\Bc(\de_n))\le c \de_n^{m}\left(\frac{ \wr\Kb\wr}{\de_n}\right)^{m-2}\le c \de_n^2,
\end{equation*}
 which implies
\begin{equation*}
  \int_{\Gc_n}\frac{\om(d(Y))}{d(Y)}dY\le c\frac{\om(\de_{n+1})}{\de_{n+1}} \meas_m(\Gc_n)\le c\om(\de_{n+1})\de_n^2.
\end{equation*}
We  substitute the last estimate in \eqref{Int.strip} and obtain
\begin{equation*}
  \int_{\Bc(\de)}\frac{\om(d(Y))}{d(Y)}dY \le \sum_n \frac{\de_1}{2^n} \om\left(\frac{\de_1}{2^n}\right)<\infty.
\end{equation*}
\end{proof}
Lemma \ref{lem.modulus.int} implies the following important property.
\begin{cor}\label{Cor.L1} As $\de_1\to +0,$ the integral in \eqref{est.mod} tends to zero.
\end{cor}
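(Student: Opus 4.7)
The plan is to deduce the corollary from Lemma \ref{lem.modulus.int} by a direct application of Lebesgue's dominated convergence theorem, exploiting that $\Kb$ has zero Lebesgue measure so that the sets $\Bc(\de_1)$ collapse to a null set as $\de_1\to 0^+$.

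First I would fix a reference scale $\de_1^0>0$ and observe that, by Lemma \ref{lem.modulus.int}, the function $g(Y):=\frac{\om(d(Y))}{d(Y)}\1_{\Bc(\de_1^0)}(Y)$ lies in $L^1(\R^m)$. Since $\Bc(\de_1)\subset \Bc(\de_1^0)$ for all $0<\de_1\le\de_1^0$, the integrands $\frac{\om(d(Y))}{d(Y)}\1_{\Bc(\de_1)}(Y)$ are dominated pointwise by $g$, furnishing the envelope required for dominated convergence.

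Second I would verify pointwise almost everywhere convergence of these integrands to zero. The covering structure built into Condition \textbf{A} shows that if $Y\in\Bc(\de_1)$ then $|Y-Z_l|\le c\de_1$ for some $Z_l\in\Kb$, so $d(Y)\le c\de_1$. Therefore, for any fixed $Y\in\R^m\setminus\Kb$, once $\de_1<d(Y)/c$ one has $Y\notin\Bc(\de_1)$, so $\1_{\Bc(\de_1)}(Y)\to 0$. Because $\Kb$ has zero Lebesgue measure, this yields convergence almost everywhere, and Lebesgue's theorem produces the stated limit.

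I do not anticipate a serious obstacle: the only point requiring care is the elementary geometric observation that membership in $\Bc(\de_1)$ forces $d(Y)\le c\de_1$, which is immediate from the definition of $\Bc(\de)$. An alternative, more quantitative route would be to reuse the dyadic series bound from the proof of Lemma \ref{lem.modulus.int}, namely to rerun the estimate with $\de_n=2^{1-n}\de_1$ and combine the monotonicity of $\om$ with the inequality $\om(\de_n)\le C\om(\de_1)$ to get a decay rate of the form $\int_{\Bc(\de_1)}\frac{\om(d(Y))}{d(Y)}dY\le C\de_1\om(\de_1)$; but this is sharper than what is needed for the qualitative statement the corollary asks for.
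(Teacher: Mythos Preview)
Your argument via dominated convergence is correct, and your alternative quantitative route is precisely what the paper has in mind: the corollary is stated in the paper without proof, as an immediate consequence of Lemma~\ref{lem.modulus.int}, and the intended reading is that the final bound in that lemma's proof,
\[
\int_{\Bc(\de_1)}\frac{\om(d(Y))}{d(Y)}\,dY\le c\sum_{n\ge 1}\frac{\de_1}{2^n}\,\om\!\left(\frac{\de_1}{2^n}\right)\le c\,\de_1\om(\de_1),
\]
already tends to zero. So your ``alternative'' is the paper's route, while your main approach is a genuinely different, more qualitative one: instead of exploiting the explicit dyadic estimate, you use the mere finiteness of the integral together with $\Kb$ having null Lebesgue measure. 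The advantage of your dominated-convergence argument is conceptual cleanliness and no reliance on the dyadic computation; the advantage of the paper's route is that it gives the rate $O(\de_1\om(\de_1))$, which is in fact used implicitly elsewhere (compare Lemma~\ref{lem2}).

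One small imprecision: the inclusion $\Bc(\de_1)\subset\Bc(\de_1^0)$ for all $0<\de_1\le\de_1^0$ is not automatic, because the set of centers $\Zc(\de)$ varies with $\de$. What is true is that $Y\in\Bc(\de_1)$ forces $d(Y)\le c\de_1$, so $\Bc(\de_1)\subset\Kb_{c\de_1}\subset\Kb_{\de_1^0}\subset\Bc(\de_1^0)$ once $\de_1\le\de_1^0/c$. Since you only need the inclusion for $\de_1$ near zero, this does not affect your argument, but the clause ``for all $0<\de_1\le\de_1^0$'' should be replaced by ``for all sufficiently small $\de_1$''.
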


We will also need the following localized estimate.
\begin{lem}\label{lem2} Let  $X_0$ be an arbitrary point in $\Kb$ and let $r\ge\de.$ Then
\begin{equation}\label{L.2.estimate}
\Ib(X_0,r):=  \int\limits_{B_r(X_0)\cap\Bc(\de_1)}
  \frac{\om(d(Y))}{d(Y)} dY\le cr^{m-2}\de_1\om(\de_1).
\end{equation}
\end{lem}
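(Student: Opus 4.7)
The plan is to mimic the dyadic-shell decomposition used in the proof of Lemma \ref{lem.modulus.int}, but with the additional localization to the ball $B_r(X_0)$, which will be exploited through the counting part of Condition \textbf{A}. Specifically, I would set $\de_n = 2^{1-n}\de_1$ and write
\[
B_r(X_0)\cap \Bc(\de_1)\setminus\Kb \;=\; \bigcup_{n\ge 1}\bigl(B_r(X_0)\cap\Gc_n\bigr),\qquad \Gc_n=\Bc(\de_n)\setminus \Bc(\de_{n+1}),
\]
discarding $\Kb$ itself since it has Lebesgue measure zero.

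On each shell $\Gc_n$, since by construction $\Bc(\de_{n+1})\supset \Kb_{\de_{n+1}}$, every point $Y\in\Gc_n$ satisfies $d(Y)\ge\de_{n+1}$. Combined with the decay of $\om(t)/t$, this yields the pointwise bound $\om(d(Y))/d(Y)\le \om(\de_{n+1})/\de_{n+1}\le 2\om(\de_n)/\de_n$.

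Next I would estimate the volume of $B_r(X_0)\cap\Bc(\de_n)$. Any ball $B_{c\de_n}(Z_l)$ of the covering that meets $B_r(X_0)$ has its center $Z_l\in \Zc(\de_n)$ in $\overline{B_{r+c\de_n}(X_0)}$. Because $\de_n\le\de_1\le r$, we have $r+c\de_n\le (1+c)r$, so Condition \textbf{A} (applied at $X_0$ with radius $r+c\de_n$ and scale $\de_n$) gives the bound $c_1((1+c)r/\de_n)^{m-2}$ on the number of such $Z_l$. Hence
\[
\meas_m\bigl(B_r(X_0)\cap \Bc(\de_n)\bigr)\;\le\; c\,\bigl(r/\de_n\bigr)^{m-2}\cdot \de_n^{m}\;=\;c\,r^{m-2}\de_n^{2}.
\]
Multiplying by the pointwise bound and telescoping $\de_n^2/\de_{n+1}=2\de_n$ yields
\[
\int_{B_r(X_0)\cap\Gc_n}\frac{\om(d(Y))}{d(Y)}\,dY\;\le\; c\,r^{m-2}\de_n\,\om(\de_n).
\]

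Finally I would sum over $n\ge 1$. Since $\om$ is non-decreasing, $\om(\de_n)\le\om(\de_1)$, and $\sum_{n\ge 1}\de_n=2\de_1$, giving
\[
\Ib(X_0,r)\;\le\; c\,r^{m-2}\om(\de_1)\sum_{n\ge 1}\de_n\;\le\; c\,r^{m-2}\de_1\,\om(\de_1),
\]
as claimed. The only slightly delicate point is being vigilant about the direction of the inequalities involving $\om$: one must use $\om(t)/t$ non-increasing to control the integrand from above on $\Gc_n$, but only monotonicity of $\om$ itself when passing to $\om(\de_1)$ in the final sum. The hypothesis $r\ge \de_1$ is essential precisely to absorb the additive $c\de_n$ in the radius used to invoke Condition \textbf{A}.
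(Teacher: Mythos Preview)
Your argument is correct and follows essentially the same route as the paper's own proof: the same dyadic decomposition $\Gc_n=\Bc(\de_n)\setminus\Bc(\de_{n+1})$, the same pointwise bound on the integrand via the monotonicity of $\om(t)/t$, the same volume estimate for $B_r(X_0)\cap\Bc(\de_n)$ through the counting clause of Condition~\textbf{A}, and the same geometric summation at the end. Your remark that $r\ge\de_1$ is what allows one to absorb the additive $c\de_n$ into a multiple of $r$ is exactly the point where the hypothesis enters in the paper as well.
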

\begin{proof}
  We use the notations for $\de_n,\Gc_n$ from the previous proof.  Then
  \begin{equation}\label{IB1}
   \Ib(X_0,r) =\sum_{n=1}^\infty\int\limits_{B_r(X_0)\cap \Gc_{n}}\frac{\om(d(Y))}{d(Y)}dY.
  \end{equation}
 For $Y\in B_r(X_0)\cap\Gc_n$, we have
 \begin{equation*}
   d(Y)\ge c_1\de_{n+1}, \, \frac{\om(d(Y))}{d(Y)}\le c\frac{\om(\de_{n+1})}{\de_{n+1}}.
 \end{equation*}
 The ball $B_{r+2c_1\de_n}(X_0)$ contains no more than
 \begin{equation*}
  \left(\frac{r+2c_1\de_n}{\de_n}\right)^{m-2}\le c\left(\frac{r}{\de_n}\right)^{m-2}
\end{equation*}
points of the set $\Zc(\de_1)$, therefore,
 \begin{equation*}
   \meas_{m}(B_r(X_0)\cap\Bc(\de_n))\le c \de_n^m \left(\frac{r}{\de_n}\right)^{m-2}=c r^{m-2}\de_n^2.
 \end{equation*}
 The latter inequality enables us to estimate the integral in  \eqref{IB1}:
 \begin{equation}\label{IB2}
  \sum_n \int\limits_{B_r(X_0)\cap\Gc_n}
   \frac{\om(d(Y))}{d(Y)}\le c\sum_n\frac{\om(\de_{n+1})}{\de_{n+1}}r^{m-2}\de_{n}^2\le
   c\de_1\om(\de_1) r^{m-2}.
 \end{equation}
\end{proof}

The reasoning in the proof of Lemma \ref{lem2} produces one more important estimate. For $X_0\in\Kb$ and an arbitrary $\de>0$, we have $B_{c_1\de}(X_0)\subset \Bc(\de).$ Therefore,
\begin{equation}\label{L2.a}
  \int_{B_\de(X_0)} \frac{\om(d(Y))}{d(Y)}\le c \de^{m-1}\om(\de).
\end{equation}

\section{The integral representation}\label{Sect.Int.repr} For a fixed function $f$ on $\Kb$ with continuity modulus $\om,$ as above, let $f_0$ be the extension of $f$ to $\R^m$ constructed in accordance with Lemma \ref{lem.ext}. The aim of this section is to justify the integral representation for the function $f_0$ in $\Omb.$ At the moment, we may not  apply immediately the general formula \eqref{int.repr}  since the derivatives of $f_0$ may have rather strong singularities when approaching the set $\Kb.$ Therefore, a limit procedure is needed in order to justify the resulting formula.
\subsection{Integral representation outside $\Kb$}
Recall that   $f_0$ vanishes at the boundary of the ball $\Omb.$
 We fix some point $X\notin \Kb.$ For a  sufficiently small $\de_1>0,$ such that $X\in \Theta_{\de_1}$ (see the definition in \eqref{Theta}), and for  the  function  $f_0$ which by construction is smooth outside $\Kb$, we apply the formula for the integral representation in the domain $\Theta_{\de_1},$ which is possible since $f_0$ is smooth in this domain; we  use the Green function $G(X,Y)$ for the ball $\Omb.$ The boundary of $\Theta_{\de_1}$ consists of two parts. The first one is the boundary of the ball $\Omb.$ Since $G$ vanishes on this sphere, this part of the boundary does not contribute to the integral representation. The other part of the boundary of $ \Theta_{\de_1}$ is the interior part, namely, the joint boundary $\Si(\de_1)$ of  $ \Theta_{\de_1}$ and $\Bc(\de_1),$ the latter being the  neighborhood of the compact $\Kb$. Thus, for $X\in \Theta_{\de_1},$ we have

\begin{gather}\label{representation}
  f_0(X)=\int_{ \Theta_{\de_1}}\langle\aF(Y)\nabla_YG(X,Y),\nabla_Y f_0(Y)\rangle dY +\\\nonumber
  \int_{\Si(\de_1)}G(X,Y)\partial_{\n_\aF(Y)}f_0(Y)d\s(Y).
\end{gather}

When $\de_1$ is sufficiently small, namely when the distance from $X$ to $\Si({\de_1})$ is greater than $\frac12 d(X)$,  the Green function $G(X,Y)$  and its derivative $G(X,Y)$ satisfy  for  $Y\in \Si(\de_1)$ the estimates
\begin{equation*}
  G(X,Y)\le c d(X)^{2-m}, \, |\nabla_Y G(X,Y)|\le c d(X)^{1-m},
\end{equation*}
according to   \eqref{upper}, \eqref{deriv.1G}.

We are able now to estimate the integrals in \eqref{representation}. For the surface integral,  by \eqref{Ext2} and \eqref{Surface}, we have
\begin{gather}\label{Single.1}
  \left|\int_{\Si(\de_1)}G(X,Y)\partial_{\n_{\aF}(Y)}f_0(Y)d\s(Y)\right|\le \\\nonumber c d(X)^{1-m} \int_{\Si(\de_1)}\frac{\om(d(Y))}{d(Y)}dY\le c d(X)^{1-m}\de_1\frac{\om(\de_1)}{\de_1}=c d(X)^{1-m}\om(\de_1).
\end{gather}
Estimate \eqref{Single.1} shows that the surface integral in \eqref{representation} tends to zero as $\de_1\to 0.$

Now we consider the volume integral in \eqref{representation}. With $X\notin\Kb$ still  fixed, we use estimate \eqref{deriv.1G} for the gradient of the Green function and, again,  estimate \eqref{Ext2} for the gradient of the function $f_0.$ Together with  Lemma \ref{lem.modulus.int} and Corollary \ref{Cor.L1},  this implies that as $\de_1\to 0,$ the volume integral tends to
\begin{equation*}
  \int_{\Omb}\langle\aF(Y)\nabla_Y G(X,Y),\nabla f_0(Y)\rangle dY.
\end{equation*}

So,  we may  pass  to the limit as $\de_1\to 0$ in \eqref{representation},  use Corollary \ref{Cor.L1}, and obtain the integral representation of the function $f_0$ \emph{outside} $\Kb$:
\begin{equation}\label{Repr.fin}
  f_0(X)=\int_{\Omb}\langle \aF(Y)\nabla_Y G(X,Y),\nabla f_0(Y)\rangle dY,\, X\notin \Kb.
\end{equation}

Our next task is to show that this representation for $f_0=f$ is valid for   all points \emph{ in}  $\Kb$ as well. For this,  we need the following estimate.
\begin{lem}\label{Lem3} Let $\Kb$ satisfy Condition \textbf{A}
  and let $X_0\in\Kb$. Then for $r>0,$
  \begin{equation}\label{est.Lem3}
 \Ib(X_0,r)  \equiv \int_{B_r(X_0)}\frac{\om(d(Y))}{d(Y)}|Y-X_0|^{1-m}dY\le c\om(r).
  \end{equation}
\end{lem}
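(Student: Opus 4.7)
The plan is to bound the integral by a dyadic decomposition of $B_r(X_0)$ into shells centered at $X_0$, on each of which $|Y-X_0|^{1-m}$ is essentially constant, and then to apply the already-proved localized estimate \eqref{L2.a} on each shell.

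More precisely, I would set $r_k = 2^{-k}r$ for $k=0,1,2,\dots$ and write
\begin{equation*}
\Ib(X_0,r) = \sum_{k=0}^{\infty} \int_{A_k} \frac{\om(d(Y))}{d(Y)} |Y-X_0|^{1-m}\,dY, \qquad A_k := B_{r_k}(X_0)\setminus B_{r_{k+1}}(X_0).
\end{equation*}
On $A_k$ we have $|Y-X_0|\ge r_{k+1}=r_k/2$, hence $|Y-X_0|^{1-m}\le 2^{m-1}r_k^{1-m}$. Since $X_0\in\Kb$, the inequality \eqref{L2.a} applied with $\de=r_k$ yields
\begin{equation*}
\int_{A_k}\frac{\om(d(Y))}{d(Y)}\,dY \le \int_{B_{r_k}(X_0)}\frac{\om(d(Y))}{d(Y)}\,dY \le c\, r_k^{m-1}\om(r_k),
\end{equation*}
so each dyadic piece contributes at most $c\,\om(r_k)=c\,\om(2^{-k}r)$.

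It remains to show $\sum_{k\ge 0}\om(2^{-k}r)\le c\,\om(r)$, and this is the one step that genuinely uses the structure of $\om$. Since $\om$ is increasing, for $\tau\in[2^{-k-1}r,2^{-k}r]$ we have $\om(\tau)/\tau \ge \om(2^{-k-1}r)/(2^{-k}r)$, whence
\begin{equation*}
\int_{2^{-k-1}r}^{2^{-k}r}\frac{\om(\tau)}{\tau}\,d\tau \ge \tfrac12\,\om(2^{-k-1}r).
\end{equation*}
Summing and invoking the first condition in \eqref{omega},
\begin{equation*}
\sum_{k\ge 0}\om(2^{-k-1}r) \le 2\int_0^{r}\frac{\om(\tau)}{\tau}\,d\tau \le 2C'\om(r),
\end{equation*}
so $\sum_{k\ge 0}\om(2^{-k}r)\le (1+2C')\om(r)$. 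Combining with the per-shell bound gives $\Ib(X_0,r)\le c\,\om(r)$, as desired.

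I do not anticipate any real obstacle: the dyadic decomposition mimics the one used in Lemmas \ref{lem.modulus.int} and \ref{lem2}, the localized bound \eqref{L2.a} is already in hand, and the summability of $\om(2^{-k}r)$ is exactly what the Dini-type assumption \eqref{omega} was designed to provide. The only point to be careful with is to dyadically decompose around $X_0$ (distance to the center) rather than around $\Kb$ (distance to the compact), as was done in the earlier lemmas — the two decompositions play complementary roles, and here it is the one around $X_0$ that handles the singularity of $|Y-X_0|^{1-m}$.
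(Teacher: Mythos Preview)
Your proposal is correct and is essentially identical to the paper's proof: the same dyadic decomposition $r_k=2^{-k}r$ of $B_r(X_0)$ into shells, the same application of \eqref{L2.a} on each shell to get the contribution $c\,\om(2^{-k}r)$, and the same reduction to $\sum_{k\ge 0}\om(2^{-k}r)\le c\,\om(r)$ via the first condition in \eqref{omega}. The only cosmetic difference is that the paper passes to the integral $\int_0^\infty \om(2^{-t}r)\,dt$ by treating the exponent as a continuous variable and then changes variables, whereas you compare the sum to $\int_0^r \om(\tau)/\tau\,d\tau$ directly via monotonicity; both arrive at the same Dini integral.
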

\begin{proof} We set $r_n=2^{-n}r,$ $n=0,\dots,$  $\Vc_n=B_{r_n}(X_0)\setminus B_{r_{n+1}}(X_0). $ Then the integral in \eqref{est.Lem3} splits into the sum
\begin{equation*}
  \Ib(X_0,r)=\sum_{n=0}^{\infty}\Jb_n; \, \Jb_n\equiv\int_{\Vc_n}\frac{\om(d(Y))}{d(Y)}|Y-X_0|^{1-m}dY.
\end{equation*}
When $Y$ belongs to the spherical annulus $\Vc_n$, we have $|Y-X_0|\ge r_{n+1},$ and, therefore,
\begin{gather*}
  \Jb\le r^{1-m}_{n+1}\int_{\Vc_n}\frac{\om(d(Y))}{d(Y)}dY\le
r_{n+1}^{1-m}\int_{B_{r_n}}\frac{\om(d(Y))}{d(Y)}dY\\\nonumber
\le c r_{n+1}^{1-m} r_n^{m-1}\om(r_n)\le c \om(2^{-n}r).
\end{gather*}
We take into account the properties of the continuity modulus $\om$ to obtain
\begin{gather*}
  \Ib(X_0,r) \le c \sum_{n=0}^\infty \om(2^{-n}r)\le c \int_0^\infty\om(2^{-t}r)dt\\\nonumber
  =c\int_1^\infty \om(\ta^{-1}r)\frac{d\ta}{\ta}=c\int_0^\ro s^{-1}\om(s) ds\le c \om(r.)
\end{gather*}
\end{proof}
Our  next aim is to determine the behavior of the quantity
\begin{equation}\label{potential.1}
 \Hb(X,\de_0)= \int_{B_{2\de_0}(X)}\frac{\om(d(Y))}{d(Y)}|X-Y|^{1-m}dY
\end{equation}
as the point $X\notin\Kb$ approaches $\Kb,$ while $\de_0=d(X).$ Note that in \eqref{potential.1}, the ball where the integration is performed, although having center $X$ outside $\Kb$, still may contain a certain portion of $\Kb.$ Therefore, for $Y\in\Kb,$ in other words, when $d(Y)=0,$ we set formally $\frac{\om(d(Y))}{d(Y)}=0,$ so that the integral in \eqref{potential.1} makes sense.
\begin{lem}\label{Lem4} For $X\notin\Kb,$
\begin{equation}\label{est.Lem4.1}
\Hb(X,\de_0)\le c \om(\de_0).
\end{equation}
\end{lem}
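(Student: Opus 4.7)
The plan is to split the integration domain $B_{2\de_0}(X)$ into a small ball $B_{\de_0/2}(X)$ around the singularity of $|X-Y|^{1-m}$ and an annular remainder, handling each piece by a different mechanism. Let $X_0\in\Kb$ be a point realizing $d(X)=\de_0=|X-X_0|$; then for every $Y\in B_{2\de_0}(X)$ one has $|Y-X_0|<3\de_0$, so $B_{2\de_0}(X)\subset B_{3\de_0}(X_0)$. This inclusion is the geometric backbone of the argument.

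On the inner ball $B_{\de_0/2}(X)$, the triangle inequality gives $d(Y)\ge\de_0/2$. Since $t\mapsto\om(t)/t$ is nonincreasing and $\om$ is nondecreasing, $\om(d(Y))/d(Y)\le 2\om(\de_0)/\de_0$ on this region. Therefore the contribution to $\Hb(X,\de_0)$ from this piece is dominated by $c\frac{\om(\de_0)}{\de_0}\int_{B_{\de_0/2}(X)}|X-Y|^{1-m}dY$. The standard polar-coordinates computation $\int_0^{\de_0/2}r^{1-m}\cdot c_m r^{m-1}dr=c\de_0$ evaluates the last integral, so this piece is bounded by $c\om(\de_0)$.

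On the outer region $B_{2\de_0}(X)\setminus B_{\de_0/2}(X)$, the factor $|X-Y|^{1-m}$ is uniformly bounded by $c\de_0^{1-m}$. The region is contained in $B_{3\de_0}(X_0)$, and since $X_0\in\Kb$ we have $B_{3\de_0}(X_0)\subset\Kb_{3\de_0}\subset\Bc(3\de_0)$ by the covering property noted after \eqref{Bc(de)}. Applying Lemma \ref{lem2} with $r=\de_1=3\de_0$ yields
\begin{equation*}
\int_{B_{3\de_0}(X_0)}\frac{\om(d(Y))}{d(Y)}dY\le c(3\de_0)^{m-2}\cdot 3\de_0\cdot\om(3\de_0)\le c'\de_0^{m-1}\om(\de_0),
\end{equation*}
and multiplying by $c\de_0^{1-m}$ again produces $c\om(\de_0)$. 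Adding the two contributions gives \eqref{est.Lem4.1}.

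I do not expect a genuine obstacle here: the entire argument is a bookkeeping combination of Lemma \ref{lem2}, the easy radial integral $\int|X-Y|^{1-m}dY\sim R$ over $B_R(X)$, and monotonicity of $\om(t)/t$. The only point requiring a moment of care is the inclusion $B_{3\de_0}(X_0)\subset\Bc(3\de_0)$, which ensures Lemma \ref{lem2} is applicable and relies on choosing the closest point $X_0\in\Kb$ so that the ball $B_{3\de_0}(X_0)$ sits inside the $3\de_0$-neighborhood of $\Kb$.
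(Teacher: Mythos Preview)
Your proof is correct and follows essentially the same approach as the paper: the same split into $B_{\de_0/2}(X)$ and its complement in $B_{2\de_0}(X)$, the same treatment of each piece. The only cosmetic difference is that for the outer shell the paper encloses it in $B_{4\de_0}(X_0)$ and invokes the consequence \eqref{L2.a} of Lemma~\ref{lem2} (citing Lemma~\ref{Lem3}, somewhat loosely), whereas you enclose it in $B_{3\de_0}(X_0)$ and apply Lemma~\ref{lem2} directly after noting $B_{3\de_0}(X_0)\subset\Bc(3\de_0)$; the underlying estimate is the same.
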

\begin{proof} We consider two domains in the ball $B_{2\de_0}(X)$, where separate estimates will be performed.

If $Y$ belongs to the ball $B_{\de_0/2}(X),$  we have $d(Y)\ge \frac12 \de_0.$ Therefore, for the integral $\Ib^{(1)}$ over this ball, the estimate holds
\begin{equation*}
\Ib^{(1)}\le \frac{\om(\de_0/2)}{\de_0}\int_{B_{\de_0/2}(X)}|X-Y|^{1-m} dY \le c\om(\de_0/2)\le c \om(\de_0).
\end{equation*}
For the integral  $\Ib^{(2)}$ over the remaining part  of the ball $B_{2\de_0}(X)$, the points $X,Y$ are controllably separated, $|X-Y|\ge \frac12 \de_0.$ Let $X_0\in\Kb$ be a point in $\Kb$ such that $|X-X_0|=\de_0$.  Then the ball $B_{2\de_0}(X)$ is contained in the ball  $B_{4\de_0}(X_0)$, and therefore, by Lemma \ref{Lem3},
\begin{gather*}
  \Ib^{(2)}=\int\limits_{B_{2\de_0}(X)\setminus B_{\de_0/2}(X)}\frac{\om(d(Y))}{d(Y)}|X-Y|^{1-m}dY\\\nonumber
\le c \de_0^{1-m}\int_{B_{4\de_0}(X_0)}\frac{\om(\de(Y))}{d(Y)}dY\\\nonumber
\le c \de_0^{1-m}\de_0^{m-1}\om(2\de_0)\le c \om(\de_0).
\end{gather*}
We sum these estimates for $\Ib^{(1)}$  and $\Ib^{(2)}$ to obtain \eqref{est.Lem4.1}.
\end{proof}

\subsection{Integral representation on $\Kb$}
We define  the function $f_0^*$ on the compact $\Kb$ in the following way: for a point $X_0\in \Kb,$ we set
\begin{equation}\label{f*}
  f_0^*(X_0)=\ab[f_0(.),G(X_0,.)]\equiv \int_{\Omb}\langle  \nabla f_0(Y), \aF(Y)\nabla_YG(X_0,Y)\rangle dY.
\end{equation}
\begin{lem}\label{Thm.representation}
  Let $X\notin\Kb,$ $X_0\in\Kb,$ $|X-X_0|=d(X)=\de.$
  Then
  \begin{equation}\label{convergence}
    |f_0^*(X_0)-f_0(X)|\le c \om(\de).
  \end{equation}
 Similarly, for $X_0,X_1\in \Kb,$ $|X_0-X_1|=\de,$
  \begin{equation}\label{convergence inside}
    |f_0^*(X_0)-f_0^*(X_1)|\le c \om(\de).
  \end{equation}
\end{lem}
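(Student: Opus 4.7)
Both estimates follow from a single scheme: rewrite the relevant difference as one integral against $\nabla f_0$ involving the difference of Green--function gradients, then split the integration into a \emph{near} piece (a ball of radius of order $\de$ around $X_0$) and a \emph{far} piece. Using \eqref{Repr.fin} and \eqref{f*}, Part~1 reduces to controlling
\begin{equation*}
f_0^*(X_0)-f_0(X)=\int_{\Omb}\langle\aF(Y)\bl\nabla_Y G(X_0,Y)-\nabla_Y G(X,Y)\br,\nabla f_0(Y)\rangle\,dY,
\end{equation*}
and Part~2 is the analogous identity with $X$ replaced by $X_1\in\Kb$. I would split each integral at $B_{3\de}(X_0)$.

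\textbf{Near region.} For the $X_0$-piece I would use $|\nabla_Y G(X_0,Y)|\le c|Y-X_0|^{1-m}$ together with $|\nabla f_0(Y)|\le c\om(d(Y))/d(Y)$ and directly apply Lemma~\ref{Lem3} with $r=3\de$ to bound it by $c\om(\de)$. The $X$-piece (needed only in Part~1) is the delicate one, since $X\notin\Kb$ but $X$ sits at distance $\de$ from $\Kb$; I would further split $B_{3\de}(X_0)$ into $B_{\de/2}(X)$ and its complement. On $B_{\de/2}(X)$ one has $d(Y)\ge\de/2$, so $\om(d(Y))/d(Y)\le c\om(\de)/\de$ and $\int|X-Y|^{1-m}dY\le c\de$ together yield $c\om(\de)$. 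On the complement, $|X-Y|\ge\de/2$ kills the Green kernel, and the remaining integral $\int_{B_{3\de}(X_0)}\om(d(Y))/d(Y)\,dY\le c\de^{m-1}\om(\de)$ (the straightforward extension of \eqref{L2.a} to radius $3\de$) finishes the estimate. For Part~2 this step is symmetric: $B_{3\de}(X_0)\subset B_{4\de}(X_1)$, so Lemma~\ref{Lem3} applied at $X_1$ handles the $X_1$-piece as well.

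\textbf{Far region.} On $\Omb\setminus B_{3\de}(X_0)$, for $Z$ on the segment joining $X_0$ to $X$ (or $X_1$) one has $|Y-Z|\ge(2/3)|Y-X_0|$, so the second-derivative bound \eqref{deriv.2G} together with the mean value inequality gives
\begin{equation*}
|\nabla_Y G(X_0,Y)-\nabla_Y G(X,Y)|\le c\de\,|Y-X_0|^{-m}.
\end{equation*}
I would decompose this region dyadically into annuli $A_k=\{r_k\le|Y-X_0|<r_{k+1}\}$ with $r_k=3\cdot 2^k\de$, bound $|Y-X_0|^{-m}\le r_k^{-m}$ on $A_k$, and use $\int_{B_{r_{k+1}}(X_0)}\om(d(Y))/d(Y)\,dY\le c r_{k+1}^{m-1}\om(r_{k+1})$ (the extension of \eqref{L2.a}). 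The total is dominated by
\begin{equation*}
c\de\sum_{k}\frac{\om(r_k)}{r_k}\le c\de\int_{3\de}^{\infty}\frac{\om(t)}{t^2}\,dt\le c\om(\de),
\end{equation*}
the last inequality being exactly the second half of \eqref{omega}.

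\textbf{Expected main obstacle.} The only genuinely subtle step is the near-region analysis of Part~1. Lemma~\ref{Lem3} is centered at a point of $\Kb$, so a single application at $X_0$ cannot absorb the singularity of $\nabla_Y G(X,\cdot)$ at $Y=X$, which lives off $\Kb$; the auxiliary split around $B_{\de/2}(X)$ is the device that makes this work. Everything else is a routine dyadic summation controlled by the Dini-type condition \eqref{omega}; Part~2 is strictly easier than Part~1 because both $X_0,X_1$ lie on $\Kb$ and so the $X_1$-piece is handled by a second application of Lemma~\ref{Lem3} without any secondary decomposition.
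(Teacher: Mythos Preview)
Your proposal is correct and follows essentially the same scheme as the paper's proof: the same near/far split around a ball of radius comparable to $\de$, the same dyadic annular decomposition in the far region controlled by \eqref{deriv.2G} and the second condition in \eqref{omega}. The only cosmetic difference is that your ``secondary decomposition'' of the $X$-piece (splitting $B_{3\de}(X_0)$ at $B_{\de/2}(X)$) is exactly the content of Lemma~\ref{Lem4}, which the paper isolates as a separate statement and then invokes, whereas you reproduce its proof inline.
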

\begin{proof} We present the proof of \eqref{convergence}. The reasoning establishing  \eqref{convergence inside} is quite similar.

 We set $r_n=2^{n+1}\de,$ $n\ge1.$ Since the function $f_0$ is smooth outside $\Kb$, at the points $X\notin\Kb$ the integral representation \eqref{representation}  is valid. The aim of this lemma consists in establishing that this integral representation is valid for all points in $\Kb$ as well.  So, we have

\begin{gather}\label{difference}
  f_0^*(X_0)-f_0(X) =\\\nonumber \int\limits_{\Omb}\langle\aF(Y)\nabla f_0(Y),\nabla_YG(X_0,Y)\rangle dY-  \int\limits_{\Omb}\langle\aF(Y)\nabla f_0(Y),\nabla_YG(X,Y)\rangle dY=
  \\\nonumber
  \int\limits_{B_{r_0}(X_0)} \langle\aF(Y)\nabla f_0(Y),\nabla_Y(G(X_0,Y))\rangle dY-\int\limits_{B_{r_0}(X_0)}\langle \aF (Y)\nabla f_0(Y),\nabla_YG(X,Y)\rangle dY+\\\nonumber
  \sum_{n=1}^\infty \int\limits_{B_{r_n}(X_0)\setminus B_{r_{n-1}}(X_0)}\langle \aF(Y)\nabla f_0(Y),\nabla_Y(G(X_0,Y)-G(X,Y))\rangle dY\\\nonumber
  \equiv (I_0(X_0)-I_0(X))+\sum_{n=1}^\infty (I_n(X_0)-I_n(X)).
\end{gather}
We recall the estimates \eqref{deriv.1G} for the Green function $G(X,Y):$
\begin{equation*}
  |\nabla_YG(X,Y)|\le c|X-Y|^{1-m}, \,  |\nabla_YG(X_0,Y)|\le c|X_0-Y|^{1-m}.
\end{equation*}
Therefore, for the first term on the last line in \eqref{difference},  we have, by Lemma \ref{Lem3} and Lemma \ref{Lem4},
\begin{gather}\label{I0}
  I_0(X_0)\le c \int_{B_{r_0}(X_0)}|Y-X_0|^{1-m}|\nabla f_0(Y)|dY\le\\\nonumber
  c\int\limits_{B_{r_0}(X_0)}|Y-X_0|^{1-m}\frac{\om(d(Y))}{d(Y)}dY\le c\om(r_0)\le c\om(\de), \,\mbox{and,}\, \mbox{similarly,}\\\nonumber
  I_0(X)\le c\int{B_{r_0}(X_0)}|Y-X_0|^{1-m}|\nabla f_0(Y)|dY\le \\\nonumber c\int\limits_{B_{2r_0}(X)}|Y-X|^{1-m}\frac{\om(d(Y))}{d(Y)}dY\le
  c\om(\de).
\end{gather}
To handle  the sum on the last line in \eqref{difference}, we use the estimate \eqref{deriv.2G} for the Green function, which implies
\begin{equation*}
 | \nabla_Y G(X_0,Y)-\nabla_YG(X,Y)|\le c|X_0-X|\max_{\ta\in[0,1]}|Y-(\ta X+(1-\ta)X_0)|^{-m}.
\end{equation*}
For $Y\notin B_{r_n}(X_0)$, $n>1$, we have $|Y-X_0|\ge 2 \de=|X-X_0|,$  therefore,
\begin{equation*}
  |Y-(\ta X+(1-\ta)X_0)|\ge |Y-X_0|-(1-\ta)|X-X_0|\ge |Y-X_0|-\de\ge \frac12 |X-X_0|,
\end{equation*}
which implies
\begin{equation*}
   | \nabla_Y G(X_0,Y)-\nabla_YG(X,Y)|\le c\de |Y-X_0|^{-m}.
\end{equation*}
As a result,
\begin{gather*}
  |I_n(X_0)-I_n(X))|\le c\de \int\limits_{B_{r_n}(X_0)\setminus B_{r_{n-1}}(X_0)}|Y-X_0|^{-m}\frac{\om(d(Y))}{d(Y)}dY\le \\\nonumber
  c\de (2^n\de)^{-m}\frac{\om(2^n\de)}{2^n\de}\int_{B_{r_n}}dY=c\de(2^n\de)^{-m}\frac{\om(2^n\de)}{2^n\de}\times(2^n\de)^m=c2^{-n}\om(2^n\de)
\end{gather*}
Therefore, for the sum in  \eqref{difference}, we have
\begin{gather}\label{Sum.In}
  \sum_{1}^{\infty}(I_n(X_0)-I_n(X))\le c\sum_{1}^{\infty}2^{-n}\om(2^n\de)\le c\int_0^{\infty}\frac{\om(2^t\de)}{2^t}dt=\\\nonumber
  c\int_1^\infty\frac{\om(\de\vs)}{\vs^2}d\vs =c\de\int_{\de}^\infty
\frac{\om(\uu)}{\uu^2}d\uu\le c \om(\de),\end{gather}
by \eqref{omega}.

The substitution of \eqref{I0} and \eqref{Sum.In} into \eqref{difference} gives \eqref{convergence} and thus proves the Lemma.
\end{proof}

\begin{lem}\label{Thm.coincidence}
  Let $X_0$ be a point $\Kb.$ Let $f_0^*(X_0)$ be constructed as in \eqref{f*}. Then
  \begin{equation*}
    f_0^*(X_0)=f(X_0).
  \end{equation*}
\end{lem}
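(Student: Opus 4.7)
The plan is to realize $f_0^*(X_0)$ as the limit of values $f_0(X)$ for $X$ approaching $X_0$ from outside $\Kb$, and to identify this limit with $f(X_0)$ via the continuity of the extension $f_0$; the two estimates already supplied by Lemma~\ref{Thm.representation} will make the convergence automatic.

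First I would choose a sequence $X_n \in \R^m \setminus \Kb$ with $X_n \to X_0$. Such a sequence exists because $\Kb$ has zero Lebesgue measure, so every ball centered at $X_0$ meets the complement. For each $n$, let $X_0^{(n)} \in \Kb$ realize the distance, $\de_n := d(X_n) = |X_n - X_0^{(n)}|$; this minimizer exists by compactness of $\Kb$. Since $X_0 \in \Kb$, we have $\de_n \le |X_n - X_0|$, and therefore $|X_0^{(n)} - X_0| \le |X_0^{(n)} - X_n| + |X_n - X_0| \le 2|X_n - X_0| \to 0$.

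Next I would apply the two conclusions of Lemma~\ref{Thm.representation}: the estimate \eqref{convergence} with the pair $(X_n, X_0^{(n)})$ yields $|f_0^*(X_0^{(n)}) - f_0(X_n)| \le c\,\om(\de_n)$, and the estimate \eqref{convergence inside} with the pair $(X_0, X_0^{(n)})$ yields $|f_0^*(X_0) - f_0^*(X_0^{(n)})| \le c\,\om(|X_0 - X_0^{(n)}|)$. The triangle inequality then gives $|f_0^*(X_0) - f_0(X_n)| \le c\bigl(\om(\de_n) + \om(|X_0 - X_0^{(n)}|)\bigr)$, and as $n \to \infty$ both arguments of $\om$ tend to $0$, so the right-hand side vanishes. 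By the continuity of $f_0$ on $\R^m$, established in Lemma~\ref{lem.ext}, and the fact that $f_0$ restricts to $f$ on $\Kb$, one has $f_0(X_n) \to f_0(X_0) = f(X_0)$. Combining the two displays forces the identification $f_0^*(X_0) = f(X_0)$.

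There is essentially no obstacle at this stage: the hard analytic work---pointwise bounds on $G$ and its derivatives, the dyadic decomposition absorbing $\om(d(Y))/d(Y)$ against the singularity of $\nabla G$, and the first-order control of $\nabla_Y G(X_0,Y) - \nabla_Y G(X,Y)$---has already been discharged in Lemmas~\ref{Lem3}, \ref{Lem4}, and \ref{Thm.representation}. The only item worth a second glance is the selection of the approximating sequence $X_n \notin \Kb$, which is immediate from $\meas_m(\Kb) = 0$ and adds nothing technical to the argument.
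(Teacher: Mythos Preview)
Your proof is correct and follows essentially the same approach as the paper's: pick an auxiliary point $X$ outside $\Kb$ near $X_0$, let $X_1\in\Kb$ be its nearest point, apply both estimates \eqref{convergence} and \eqref{convergence inside} of Lemma~\ref{Thm.representation} to control $|f_0^*(X_0)-f_0(X)|$, and then compare $f_0(X)$ with $f(X_0)$ via the continuity of the extension $f_0$. The only cosmetic difference is that you phrase the argument with a sequence $X_n\to X_0$, while the paper works with an arbitrary small $\de$ and lets $\de\to 0$.
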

\begin{proof}We take an arbitrary small $\de>0$ and find a point $X\notin \Kb$ such that $\frac{\de}2\le |X-X_0|\le\de$ (such point, of course, exists since  $\Kb$ possesses no interior points.) Let, further, $X_1\in\Kb$ be (one of) the closest point(s) in $\Kb$ to $X$ and we set $d(X)=|X_1-X|$. Then $d(X)\le \de$ and $|X_0-X_1|\le |X_0-X|+|X_1-X|\le 2\de.$ By Theorem \ref{Thm.representation},
\begin{equation*}
  |f_0^*(X_0)-f_0(X)|\le |f_0^*(X_0))-f^*_0(X_1)|+|f_0^*(X_1)-f_0(X)|\le c\om(\de).
\end{equation*}
  We recall now  that the  function $f_0$ satisfies $|f(X_0)-f_0(X)|\le c \om(\de),$ according to the way how  $f_0$ was constructed.  Therefore,
  \begin{equation*}
    |f_0^*(X_0)-f(X_0)|\le |f_0^*(X_0)-f_0(X)|+|f_0(X)-f(X_0)|\le c\om(\de).
  \end{equation*}
  Since $\de>0$ was chosen arbitrarily, it follows that $f_0^*(X_0)-f(X_0)=0.$
\end{proof}

Thus, we have established that the integral representation \eqref{Repr.fin} is valid not only for $X\notin\Kb$ but for $X\in\Kb$ as well, this means on the whole domain $\Omb.$

\section{Construction of the approximating  function}\label{Sect.solution}
\subsection{Formulation of main theorem}
The aim of this section is to prove the approximation theorem.
\begin{thm}\label{Main.Theorem}Let $\Kb\subset\R^m,$ $m\ge 3,$ be a compact set satisfying Condition \textbf{A}. Let $\Lc$ be an elliptic operator in the neighborhood of $\Kb,$ satisfying conditions of Sect. \ref{GreenFunction}. Suppose that the function $f(X),$ $X\in\Kb,$ belongs to $C^{\om}(\Kb),$ where $\om$ is a continuity modulus as above. Then for some constants $c_1,c_2,c_3>0,$ for any $\de>0$ there exists a function $v_\de\in H^1(\Omb)$, which is a weak solution of the equation $\Lc v_\de=0$ in the $\de$-neighborhood $\Kb_\de$ of $\Kb,$ satisfies the estimate $|\nabla v_\de|\le c_2 \frac{\om(\de)}{\de}$ in $\Kb_\de$, and it approximates $f$ on $\Kb$:
\begin{equation*}
  |f(X)-v_\de(X)|\le c_3 \om(\de), \, X\in\Kb.
\end{equation*}
\end{thm}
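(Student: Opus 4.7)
The plan is to build $v_\de$ by truncating the integral representation \eqref{Repr.fin} of the extension $f_0$ from Lemma~\ref{lem.ext}: remove the piece of the integration that sits inside the singular $\de$-tube around $\Kb$, leaving a representation whose $Y$-integrand is classically smooth. After enlarging the constant $c$ in the definition \eqref{Bc(de)} of the cover so that $\Kb_\de$ lies strictly inside $\Bc(\de)$ with a uniform gap, set
\begin{equation*}
  v_\de(X):=\int_{\Omb\setminus\Bc(\de)}\langle\aF(Y)\nabla_Y G(X,Y),\nabla f_0(Y)\rangle\,dY.
\end{equation*}
By \eqref{Repr.fin} combined with Lemma~\ref{Thm.coincidence}, for every $X\in\Omb$ the difference $f_0(X)-v_\de(X)$ equals the complementary integral over $\Bc(\de)$. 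The three properties claimed in the theorem are then verified in turn.

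For the weak-solution property, I would integrate by parts in $Y$ in the definition of $v_\de$: since $f_0\in C^\infty(\Omb\setminus\Kb)$ the $Y$-integrand is classically smooth on $\Omb\setminus\Bc(\de)$, and since $G(X,\cdot)$ vanishes on $\partial\Omb$ the only boundary term is a single-layer contribution on $\partial\Bc(\de)$ carrying the conormal derivative of $f_0$. This rewrites $v_\de(X)=\int G(X,Y)\,d\mu(Y)$ with $\mu$ a signed measure of finite variation consisting of $-\diver(\aF\nabla f_0)\,dY$ on $\Omb\setminus\Bc(\de)$ plus the boundary layer. The representation of weak solutions via Green potentials recalled at the end of Section~\ref{Sect.Green} gives $\Lc v_\de=\mu$ weakly on $\Omb$, whence $\Lc v_\de=0$ on the open set $\Kb_\de$ (disjoint from $\supp\mu$) and $v_\de\in H^1(\Omb)$.

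For the approximation estimate on $\Kb$, for $X_0\in\Kb$ the bounds \eqref{deriv.1G} and \eqref{Ext2} dominate
\begin{equation*}
  |f(X_0)-v_\de(X_0)|\le c\int_{\Bc(\de)}|X_0-Y|^{1-m}\frac{\om(d(Y))}{d(Y)}\,dY.
\end{equation*}
The integral over $B_{2\de}(X_0)$ is controlled directly by Lemma~\ref{Lem3} and contributes $c\om(\de)$. The complementary region is split into dyadic annuli $B_{2^{k+1}\de}(X_0)\setminus B_{2^k\de}(X_0)$ with $k\ge 1$, on each of which $|X_0-Y|^{1-m}\le c(2^k\de)^{1-m}$ while Lemma~\ref{lem2} (applied with $r=2^{k+1}\de$ and $\de_1=\de$) bounds $\int\om(d(Y))/d(Y)\,dY$ by $c(2^k\de)^{m-2}\de\om(\de)$; the $k$-th shell thus contributes $c\,2^{-k}\om(\de)$, and the geometric series sums to $O(\om(\de))$.

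The gradient estimate on $\Kb_\de$ is the main obstacle. Since $\dist(X,\Omb\setminus\Bc(\de))\gtrsim\de$ for $X\in\Kb_\de$, one may differentiate under the integral and use \eqref{deriv.2G} to get $|\nabla v_\de(X)|\le c\int_{\Omb\setminus\Bc(\de)}|X-Y|^{-m}\om(d(Y))/d(Y)\,dY$. With $X_0\in\Kb$ closest to $X$, I decompose the integration range simultaneously by the annuli $A_k=\{2^k\de\le|X_0-Y|\le 2^{k+1}\de\}$ about $X_0$ and by the tubes $D_j=\{2^j\de\le d(Y)\le 2^{j+1}\de\}$ about $\Kb$; since $d(Y)\le|X_0-Y|$ on $A_k$, only pairs with $0\le j\le k+1$ contribute. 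The volume bound employed in the proof of Lemma~\ref{lem2}, namely $\meas_m(B_r(X_0)\cap\Bc(\de_1))\le c\,r^{m-2}\de_1^2$, yields $|A_k\cap D_j|\le c\,2^{k(m-2)+2j}\de^m$; combined with $|X-Y|^{-m}\le c(2^k\de)^{-m}$ (for $k\ge 1$) and $\om(d(Y))/d(Y)\le c\om(2^j\de)/(2^j\de)$, each $(k,j)$-piece is dominated by $c\,2^{j-2k}\om(2^j\de)/\de$. Summing in $k$ first collapses the double sum to $(c/\de)\sum_{j\ge 0}2^{-j}\om(2^j\de)$, and the second condition in \eqref{omega}, equivalent to $\sum_{j\ge 0}2^{-j}\om(2^j\de)\le c\om(\de)$, completes the bound; the $k=0$ piece is treated separately by combining the trivial volume bound with the uniform lower bound $|X-Y|\gtrsim\de$. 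The sharpness of the estimate is delicate: the crude inequality $\om(2^j\de)\le 2^j\om(\de)$ would introduce a spurious $\log(1/\de)$ factor, and only the tail Dini condition in \eqref{omega} closes the argument.
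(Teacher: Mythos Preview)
Your construction of $v_\de$ and your arguments for the approximation and gradient estimates match the paper's, with only a reorganization in the gradient bound: the paper uses a single dyadic decomposition into annuli $B_{r_n}(X_0)\setminus B_{r_{n-1}}(X_0)$ and on each annulus applies the ball estimate \eqref{L2.a}, whereas you unfold this into a double decomposition over annuli $A_k$ and tubes $D_j$, in effect reproving \eqref{L2.a} inline. Both end with the same tail sum $\sum_{n}2^{-n}\om(2^n\de)\le c\,\om(\de)$.

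There is a genuine gap in your weak-solution argument. Your integration by parts in $Y$ produces the volume density $-\diver_Y(\aF(Y)\nabla f_0(Y))$, which you then package as part of a signed measure of finite variation. But the paper assumes only Dini continuity of $\aF$, not $C^1$; under that hypothesis $\aF\nabla f_0$ need not have a pointwise divergence, so the claimed measure is not well defined and the Green-potential representation from Section~\ref{Sect.Green} (stated for measures of bounded variation) cannot be invoked. Your assertion that ``the $Y$-integrand is classically smooth on $\Omb\setminus\Bc(\de)$'' is unjustified: $f_0$ is smooth there, but $\aF$ is not. The paper bypasses this entirely (Lemma~\ref{Lem.harmon}): for $\vp\in C_0^\infty(\Bc(\de))$ it writes out $\ab[v_\de,\vp]$, interchanges the $X$ and $Y$ integrations, and uses the defining property \eqref{Green} of the Green function to obtain $\ab[v_\de,\vp]=\int_{\Omb\setminus\Bc(\de)}\langle\aF\nabla f_0,\nabla\vp\rangle\,dY=0$, the last equality because $\supp\vp\subset\Bc(\de)$. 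This route never differentiates $\aF$ and works under the stated hypotheses; your argument would go through only under the stronger assumption $\aF\in C^1$.
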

It stands to reason that if the set $\Kb$ is $\theta$-regular for $\theta=m-2$, the Condition \textbf{A} is satisfied, therefore, the inference of Theorem \ref{Main.Theorem} is valid.

\subsection{The  approximation}\label{sect.approximation}
Having $\de$ fixed,
we introduce the function
\begin{equation}\label{v(X)}
  v_\de(X)=\int\limits_{\Omb\setminus \Bc(\de)}\langle\aF(Y)\nabla f_0(Y), \nabla_Y G(X,Y)\rangle dY.
\end{equation}
The property of main importance for our studies is the following.
\begin{lem}\label{Lem.harmon}
  The function $v_\de$ is a weak solution of the equation $\Lc v_\de=0$ in $\Bc(\de).$
\end{lem}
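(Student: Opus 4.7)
The plan is to argue directly from the weak formulation: for an arbitrary test function $\psi\in C_0^\infty(\Bc(\de))$, show that $\ab[v_\de,\psi]=0$. The crucial geometric input is that $\supp\psi$ is a compact subset of $\Bc(\de)$, so
\begin{equation*}
\eta := \dist(\supp\psi,\, \Omb\setminus\Bc(\de)) > 0.
\end{equation*}
On the product set $\supp\psi\times(\Omb\setminus\Bc(\de))$ we have $|X-Y|\ge\eta$; hence by \eqref{upper}, \eqref{deriv.1G}, \eqref{deriv.2G} the Green function together with its first and mixed second derivatives is uniformly bounded, and $|\nabla f_0(Y)|\le c\,\om(d(Y))/d(Y)$ is bounded on $\Omb\setminus\Bc(\de)$ because $d(Y)$ is bounded below there. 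All the exchanges of differentiation and integration below are routine in view of these bounds.

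The central ingredient is the identity, valid for every $Y\in\Omb$,
\begin{equation*}
\int_\Omb a_{jk}(X)\,\partial_{X_k}G(X,Y)\,\partial_{X_j}\psi(X)\,dX = \psi(Y),
\end{equation*}
which follows from the defining property \eqref{Green} of the Green function combined with the symmetries $a_{jk}=a_{kj}$ and $G(X,Y)=G(Y,X)$ (the latter being the standard consequence of the formal self-adjointness of $\Lc$ and the uniqueness of $G$). For $Y\in\Omb\setminus\Bc(\de)$ the integrand is smooth in $Y$, so differentiating under the integral in $Y_l$ yields
\begin{equation*}
\int_\Omb a_{jk}(X)\,\partial_{X_k}\partial_{Y_l}G(X,Y)\,\partial_{X_j}\psi(X)\,dX = \partial_{Y_l}\psi(Y),\qquad Y\in\Omb\setminus\Bc(\de).
\end{equation*}

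To finish, I would differentiate $v_\de$ under the integral in \eqref{v(X)}, substitute into
\begin{equation*}
\ab[v_\de,\psi] = \int_\Omb a_{jk}(X)\,\partial_{X_k}v_\de(X)\,\partial_{X_j}\psi(X)\,dX,
\end{equation*}
and apply Fubini. Using the identity above to evaluate the resulting inner $X$-integral collapses the expression to
\begin{equation*}
\ab[v_\de,\psi] = \int_{\Omb\setminus\Bc(\de)} a_{lp}(Y)\,\partial_p f_0(Y)\,\partial_{Y_l}\psi(Y)\,dY.
\end{equation*}
Since $\supp\psi\subset\Bc(\de)$, the factor $\partial_{Y_l}\psi(Y)$ vanishes on the outer domain of integration, so $\ab[v_\de,\psi]=0$ for every admissible $\psi$, which is exactly the weak harmonicity claim.

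The principal technical item to verify carefully is the identity displayed in the second paragraph, i.e.\ combining \eqref{Green} with the two symmetries of $\aF$ and $G$ to recognize the inner $X$-integral as $\partial_{Y_l}\psi(Y)$. Once this is in place, the Fubini exchange and the differentiations under the integral sign present no real difficulty, thanks to the positive separation $\eta$: neither the non-integrability of $\nabla f_0$ near $\Kb$ nor the diagonal singularity of $G$ intervenes, since both have been excised by restricting the outer integration to $\Omb\setminus\Bc(\de)$ and the support of $\psi$ to $\Bc(\de)$.
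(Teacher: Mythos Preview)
Your proof is correct and follows essentially the same route as the paper: both compute $\ab[v_\de,\psi]$ for $\psi\in C_0^\infty(\Bc(\de))$, swap the order of integration, invoke the defining property \eqref{Green} of the Green function to collapse the inner integral to $\nabla\psi(Y)$, and conclude from $\supp\psi\subset\Bc(\de)$. You are somewhat more explicit than the paper in flagging the symmetry $G(X,Y)=G(Y,X)$ needed to match \eqref{Green} and in justifying the Fubini and differentiation-under-the-integral steps via the positive separation $\eta$.
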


\begin{proof}
  By the definition of a weak solution, see \eqref{weak} with $\mu=0$ in $\Bc(\de)$, it suffices to show that for any function $\vp\in C_0^{\infty}(\Bc(\de)),$
  \begin{equation*}
    \ab[v_\de,\vp]=0,
  \end{equation*}
  where $\ab$ is the sesquilinear form \eqref{sesq.form}.
  So, we take an arbitrary function  $\vp\in C_0^{\infty}(\Bc(\de))$ and obtain

  \begin{gather}\label{Sol1}
     \ab[v_\de,\vp]=\int_{\Bc(\de)}\langle\aF(X)\nabla_X v_\de(X),\nabla \vp(X)\rangle dX= \\\nonumber
    \int_{\Bc(\de)}\langle \aF(X)\nabla_X\left(\langle \int_{\Omb\setminus\Bc(\de)}\aF(Y)\nabla_Y f_0(Y), \nabla_Y G(X,Y)\rangle dY\right),\nabla\vp(X)\rangle dX=\\\nonumber
    \int_{\Bc(\de)}\langle \aF(X)\left(\langle \int_{\Omb\setminus\Bc(\de)}\aF(Y)\nabla_Y f_0(Y), \nabla_Y \nabla_X G(X,Y)\rangle dY\right),\nabla\vp(X)\rangle dX.
  \end{gather}

  We change the order of integration in \eqref{Sol1}: 

  \begin{equation*}
     \ab[v_\de,\vp]=\int_{\Omb\setminus\Bc(\de)}\langle\aF(Y)\nabla f_0(Y), \nabla_Y \ab[G(.,Y),\vp(.)]\rangle dY
  \end{equation*}
  Due to the main property of the Green function  (Theorem 1.1 in \cite{GW}, see \eqref{Green}), we have
  \begin{equation*}
    \ab[G(.,Y),\vp(.)]=\vp(Y).
  \end{equation*}
  Therefore,
  \begin{equation*}
    \ab[v_\de,\vp]=\int_{\Omb\setminus\Bc(\de)}\langle \aF(Y)\nabla f_0 (Y),\nabla\vp(Y)\rangle dY,
  \end{equation*}
  and the last expression equals zero since $\vp(Y)=0$ on $\Omb\setminus\Bc(\de).$
\end{proof}

\subsection{The approximation estimate}
Next, for $X_0\in \Kb$, it follows from  \eqref{v(X)} that
\begin{equation}\label{f-v}
  f(X_0)-v_\de(X_0)=f_0^*(X_0)-v_{\de}(X_0)=
  \int\limits_{\Bc(\de)}\langle\aF(Y)\nabla f_0(Y),\nabla_Y G(X_0,Y)\rangle dY.
\end{equation}
To estimate the quantity in \eqref{f-v}, we set again $r_n=2^n\de,$ $n=0,1,\dots,$ denote $\Cc_n=\Bc(\de)\bigcap(B_{r_n}(X_0)\setminus B_{r_{n-1}}(X_0)), $ and consider  the decomposition
\begin{gather}\label{int Bc}
 \int_{\Bc(\de)}\langle\aF(Y) \nabla f_0(Y),\nabla_YG(X_0,Y)\rangle dY=\\\nonumber
  \int_{\Bc(\de)\cap B_{r_0}(X_0)} \langle\aF(Y)\nabla f_0(Y),\nabla_YG(X_0,Y)\rangle dY\\\nonumber
  +\sum_{n=1}^{\infty} \int_{\Cc_n} \langle\aF(Y)\nabla f_0(Y),\nabla_YG(X_0,Y)\rangle dY.
\end{gather}
We estimate separately the terms in \eqref{int Bc}. First, according to  estimate \eqref{deriv.1G} for the derivative of the Green function,
\begin{gather}\label{int1}
\left| \int_{\Bc(\de)\cap B_{r_0}(X_0)} \langle\aF(Y)\nabla f_0(Y),\nabla_YG(X_0,Y)\rangle dY\right|\le\\\nonumber
\int_{B_{r_0}(X_0)}|Y-X_0|^{1-m}\frac{\om(d(Y))}{d(Y)}dY\le c\om(\de),
\end{gather}
by Lemma \ref{Lem3}. Further, we have
\begin{gather}\label{int2}
  \left|\int_{\Cc_n} \langle \aF(Y) \nabla f_0(Y),\nabla_Y G(X,Y)
  \rangle dY\right|\le
  c r_{n-1}^{1-m}\int_{\Cc_n}\frac{\om(\de(Y))}{\de(Y)}dY \le \\\nonumber
  cr_{n-1}^{1-m}\int\limits_{\Bc(\de)\bigcap B_{r_{n}(X_0)}}\frac{\om(d(Y))}{d(Y)}dY\le c r_n^{1-m}r_n^{m-2}\de\om(\de)=cr_n^{-1}\de\om(\de)=c2^{-n}\om(\de),
\end{gather}
the last inequality following from \eqref{L.2.estimate}. As a result, the sum on the last line in \eqref{int Bc} is bounded by
\begin{equation}\label{int3}
  c\sum_{n=0}^\infty 2^{-n}\om(\de)=c \om(\de).
\end{equation}
We, finally, collect \eqref{int Bc}, \eqref{int1}, \eqref{int2}, \eqref{int3}, which gives us
\begin{equation}\label{appr.estim}
  |f(X_0)-v_\de(X_0)|\le c \om(\de).
\end{equation}

\subsection{Gradient estimate}\label{sect.extended}
For any points $X$ in the $\de$-neighborhood $\Kb_\de$ of $\Kb$ and $Y\in \partial\Bc(\de),$ we have $|Y-X|\ge (c_1-1)\de,$  by the triangle inequality.

We set here $r_n=3c_1 2^n\de,$ $n=0, 1,\dots.$ For $X\in\Kb_\de,$ we estimate derivatives of the function $v_\de(X)$ (defined  in \eqref{v(X)}), starting with
\begin{equation}\label{grad v.1}
  \nabla_X v_{\de}(X) =\int\limits_{\Omb\setminus\Bc(\de)}\AF[X,Y]dY,
\end{equation}
where the integrand $\AF[X,Y]$ denotes
\begin{equation*}
  \AF[X,Y]\equiv \nabla_X \langle \aF(Y)\nabla f_0(Y),\nabla_Y G(X,Y)\rangle=\langle\aF(Y)\nabla_Yf_0(Y),\nabla_X\nabla_Y G(X,Y)\rangle.
\end{equation*}

Therefore,
\begin{equation}\label{grad v.2}
  |\nabla_X v_\de(X)|\le  \left|\int\limits_{(\Omb\setminus\Bc(\de))\cap B_{r_0}(X_0)} \AF[X,Y]dY\right|+
  \sum_{n=1}^\infty\left|\int\limits_{\Sc_n} \AF[X,Y]dY\right|,
\end{equation}
where $X_0\in \Kb$ is (one of) the point(s) in $\Kb$ lying on the distance $\de$ from $X$ and $\Sc_n$ is a shorthand for
\begin{equation*}
 \Sc_n\equiv (\Omb\setminus\Bc(\de))\cap (B_{r_n}(X_0)\setminus B_{r_{n-1}}(X_0))
\end{equation*}
We recall the estimate \eqref{deriv.2G} for the double  gradient of the Green function. It implies
\begin{equation*}
  |\nabla_X\nabla_YG(X,Y)|\le c r_0^{-m}, \, \mbox{for}\, Y\in(\Om\setminus\Bc(\de))\cap B_{r_0}(X_0)
\end{equation*}
and
\begin{equation*}
   |\nabla_X\nabla_YG(X,Y)|\le c  r_n^{m},\,\mbox{for}\, Y\in \Sc_n.
\end{equation*}
As a result, the inequalities \eqref{grad v.1}, \eqref{grad v.2} and the estimate \eqref{Ext2} for the gradient of the function $f_0$ give us
\begin{gather*}
  |\nabla v_\de(X)|\le c\left(r_0^{-m}\int_{B_{r_0}(X)}\frac{\om(d(Y))}{d(Y)}dY+\sum_{n=1}^{\infty}r_n^{-m}\int\limits_{B_{r_n}(X)}\frac{\om(d(Y))}{d(Y)}dY\right)\le \\\nonumber c\left(r_0^{-m}\ro_0^{m-1}\om(r_0)+\sum_{n=1}^{\infty} r_0^{-m}r_n^{m-1}\om(r_n)\right)=\\\nonumber
  c\left(\frac{\om(r_0)}{r_0}+\sum_{n=1}^{\infty} \frac{\om(r_n)}{r_n}\right)=\frac{c}{\de}\left(\om(3c_1\de)+\sum_{n=1}^{\infty}\frac{\om(3\cdot 2^n \de)}{3\cdot 2^n c_1}\right)\le c \frac{\om(\de)}{\de},
\end{gather*}
due to the  calculation as in the last line in \eqref{Sum.In}.

The results of the last two subsections show that the function $v_\de$ does in fact satisfy all requirements claimed; this concludes the proof of   our main theorem.

\end{document}